\newtheorem{theorem}{Theorem}[section]
\newtheorem{lemma}[theorem]{Lemma}
\newtheorem{proposition}[theorem]{Proposition}
\newtheorem{corollary}[theorem]{Corollary}
\newtheorem{remark}[theorem]{Remark}
\def\R{{\mathbb R}}
\def\a{\alpha}
\def\b{\beta}
\def\e{\varepsilon}
\def\l{\lambda}
\def\m{\mu}
\def\n{\nabla}
\def\p{\partial}
\def\r{\rho}
\def\t{\tau}
\def\w{\omega}
\def\W{\Omega}
\def\1{\left(}
\def\2{\right)}
\def\3{\left\{}
\def\4{\right\}}
\def\8{\infty}
\def\ss{\subseteq}
\def\cc{\subset\subset}
\DeclareMathOperator*{\supp}{supp}
\DeclareMathOperator*{\osc}{osc}
\begin{document}

\title[Optimal regularity for obstacle-like problem with logarithm]{Optimal regularity for a two-phase obstacle-like problem with logarithmic singularity}

\author{Dennis Kriventsov}
\address[Dennis Kriventsov]{Department of Mathematics, Rutgers University, 110 Frelinghuysen Road, Piscataway, NJ}
\email{dnk34@math.rutgers.edu}

\author{Henrik Shahgholian}
\address[Henrik Shahgholian]{Department of Mathematics, KTH Royal Institute of Technology, Stockholm, Sweden}
\email{henriksh@kth.se}  

\date{September 6, 2020}

\begin{abstract} 
We consider the semilinear problem 
\[
\Delta u =  \lambda_+ \left(-\log u^+\right) 1_{\{u >  0\}}  -  \lambda_- \left(-\log u^- \right) 1_{\{u <  0\}}
\qquad  \hbox{ in } B_1,
\]
where $B_1$ is the unit ball in $\R^n$ and assume $\lambda_+, \lambda_- > 0$. Using a monotonicity formula argument, we prove an optimal regularity result for solutions: $\n u$ is a log-Lipschitz function.

This problem introduces  two main difficulties. The first is  the lack of invariance in the scaling and blow-up of the problem.  The other (more serious) issue is a term in the Weiss energy which is potentially non-integrable unless one already knows the optimal regularity of the solution: this puts us in a catch-22 situation.

\end{abstract}

\maketitle

\section{Introduction and Main results}

In \cite{QS} the authors study the semilinear elliptic problem
\begin{equation}\label{eq:main1} 
\Delta u = \left(-\log u\right) 1_{\{u >  0\}} \quad  \hbox{and} \quad  u \geq 0 \qquad  \hbox{ in } B_1,
\end{equation}
proving optimal regularity as well as non-degeneracy of the solutions: essentially, if $d = d(x, \p \{u - 0\})$, solutions behave like $d^2 |\log d|$. From here one could also show that the free boundary has zero Lebesgue measure. 

It should be remarked that the main points of interest are along  $\partial \{ u > 0\}$. We assume that the origin is such a point, and will discuss the free boundary around the origin.  Since such an analysis is local and disregards the behavior of solutions far away from the free boundary $\partial \{ u > 0\}$, we omit the boundary values on $\partial B_1$. Also all statements about regularity will be uniform in the half-ball $B_{1/2}$, where the norms depend on some norm of the solution in the unit ball and dimension.

One of the key difficulties in this problem becomes apparent when one studies the blow-up limits of such solutions, $\frac{u(r \cdot)}{ 2r^2 |\log r|}$: it is not hard to verify (see Section \ref{sec:prelim}) that these converge locally uniformly, along subsequences, to entire solutions of the classical obstacle problem
\[
	\Delta u =  1_{\{u >  0\}} \quad  \hbox{and} \quad  u \geq 0 \qquad  \hbox{ in } \R^n.
\]
On the one hand, this problem is well-understood (see \cite{PSU} for a general reference); on the other hand, the scaling for this problem is different from the scaling of \eqref{eq:main1}. This lack of invariance makes it difficult to gain information from compactness arguments in this setting. The other major difficulty is more apparent when one differentiates the equation: $\n u$ solves $\Delta \n u = - \frac{\n u}{u} 1_{\{u > 0\}}$, which is extremely singular near the origin. As most methods for studying the regularity of the free boundary involve differentiating the equation in this fashion (directly or indirectly), they are difficult to apply here. In fact, the techniques employed in \cite{QS} do not seem sufficient to prove regularity of the free boundary or otherwise go beyond what is shown there.

In this short note our goal is to explore some alternative tools available for studying problems like \eqref{eq:main1}. We consider a two-phase version of it here:
\begin{equation}\label{eq:main2} 
\Delta u =  -\lambda_+ \left(\log u^+ \right) 1_{\{u >  0\}}  + \lambda_- \left(\log u^- \right) 1_{\{u <  0\}}
     \qquad  \hbox{ in } B_1,
\end{equation}
where $u^\pm = \max (\pm u, 0)$. Unlike for \eqref{eq:main1}, the optimal regularity of solutions to this is not known in the literature, and does not follow from the arguments in \cite{QS}. Our main result here proves it using a rather different technique:
\begin{theorem}\label{thm:main1}
Let $u$ be a solution to \eqref{eq:main2}.   Then  $u \in C^{2 -\log} (B_{1/2}) (0)$, i.e.
\[
|\nabla u (x) -\nabla u (y) | \leq  C|x -y|  |\log |x - y||,
\]
for a constant $C(n, \l_+, \l_-, \int_{B_1}u^2)$.
\end{theorem}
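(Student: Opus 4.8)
The plan is to prove optimal regularity via a Weiss-type monotonicity formula adapted to the logarithmic scaling, following the general philosophy of the obstacle-problem literature (as in \cite{PSU}) but handling the two obstructions the authors emphasize in the abstract. The natural candidate energy at a free boundary point, say the origin, is something like
\[
W(r) = \frac{1}{r^{n+2}|\log r|^2}\int_{B_r}\left(\frac{1}{2}|\n u|^2 + F(u)\right)\,dx - \frac{2}{r^{n+3}|\log r|^2}\int_{\p B_r} u^2\, d\cH^{n-1},
\]
where $F'(s) = -\l_+ s \log s$ on $\{s>0\}$ and correspondingly on $\{s<0\}$, so that $F(s) \sim \frac{\l_\pm}{2} s^2 |\log s|$ near $0$ — note that $F$ itself is \emph{not} comparable to a clean quadratic, which is the source of the broken scaling. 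I would first derive the differentiation formula for $W(r)$, using the domain variation (Rellich--Pohozaev) identity for \eqref{eq:main2}, and isolate the ``bad'' term coming from the fact that $F(u)$ is not $2$-homogeneous: differentiating produces an error of the form $\frac{1}{r^{n+2}|\log r|^2}\int_{B_r}(2F(u) - u F'(u))\,dx$ plus a $|\log r|$-derivative term. The term $2F(s) - sF'(s)$ behaves like $\l_\pm s^2$ near $0$ (the logarithm cancels to leading order), so it is better than $F$ by a factor of $|\log|$, but it is still only controlled by $u^2/(r^{n+2}|\log r|^2)$, which is exactly borderline — this is the ``catch-22'': to show this term is integrable in $\frac{dr}{r}$ near $0$ one seemingly needs the $r^2|\log r|$ growth one is trying to prove.

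To break the circularity, I would run a bootstrap/iteration on the growth rate. Start from the a priori bound available from $W^{2,p}$ or $C^{1,\a}$ estimates for the semilinear equation (the right-hand side is bounded, even continuous away from the free boundary, and $\int_{B_1}u^2$ controls everything via standard elliptic estimates), giving $|u(x)| \le C|x|^{1+\a}$ at the free boundary for some $\a>0$, or at least a weak Dini-type modulus. Feed this into the error term: if $|u| \lesssim \phi(r) := r^{1+\a}$ on $B_r$, then the bad term is $\lesssim r^{2\a}/|\log r|^2$, which is now integrable, so $W(r)$ is almost-monotone and bounded; combined with a Weiss-type \emph{decay/dichotomy} argument (either $W(0^+)$ is at the ``obstacle problem value'' and one gets quadratic-log growth, or $u$ vanishes to higher order) this upgrades the growth rate. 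Iterating — or, more cleanly, setting up the monotonicity formula so that it is \emph{self-improving}: the almost-monotonicity of $W$ together with a one-step ``epiperimetric'' or ``$3/2$-type'' improvement feeds back a better $\phi$ — should converge to the sharp rate $\phi(r) = r^2|\log r|$. The blow-ups $u(r\cdot)/(2r^2|\log r|)$ converging to solutions of the classical obstacle problem $\Delta v = 1_{\{v>0\}}$, $v\ge 0$, as noted in the introduction, is what pins down the correct normalizing exponent and the value of $W(0^+)$; the key input there is that these obstacle-problem blow-ups have exactly quadratic growth and are non-degenerate, so the monotonicity formula cannot have a limit forcing faster vanishing.

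With the growth $|u(x)| \le C|x|^2|\log|x||$ (plus the matching non-degeneracy, which follows from a barrier/maximum-principle argument as in \cite{QS} comparing $u$ with $c|x|^2|\log|x||$) in hand, the passage to the gradient estimate $|\n u(x) - \n u(y)| \le C|x-y|\,|\log|x-y||$ is the more routine part: for points where, say, $2|x-y| \le \dist(x,\p\{u\ne 0\})$, interior elliptic (Schauder) estimates on the ball $B_{|x-y|}(x)$, on which the right-hand side of \eqref{eq:main2} is bounded by $C|\log\dist|$ and has a log-modulus of continuity, give the bound; for points close to the free boundary one splits through a nearest free-boundary point and uses the growth estimate on $u$ and on $\n u$ (the latter via the gradient analogue of the growth bound, obtained by differentiating inside the positivity/negativity sets and using the quadratic-log control). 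Summing the two regimes via a standard dyadic argument yields the stated $C^{2-\log}$ bound with constant depending only on $n, \l_+, \l_-$ and $\int_{B_1}u^2$.

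I expect the main obstacle to be making the monotonicity formula genuinely \emph{self-improving} rather than merely conditionally monotone: one has to arrange the definition of $W$ (the precise powers of $|\log r|$, possibly an extra lower-order correction term, and the right companion quantity to subtract) so that the borderline error term is absorbed using only the information the formula itself produces at the previous scale, closing the loop without an external input of the optimal rate. Secondarily, handling the two phases simultaneously — the free boundary $\p\{u>0\}\cup\p\{u<0\}$ and the two-phase points where both touch — requires the monotonicity argument and the barrier constructions to be robust to sign changes, though since no two-phase cancellation like in the classical two-phase obstacle problem is needed for \emph{regularity of $u$} (only for free-boundary regularity, which is not claimed), this should be manageable by treating $u^+$ and $u^-$ largely separately and using that $\Delta u$ has a sign on each phase.
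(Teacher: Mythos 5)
Your overall skeleton (a log-normalized Weiss energy with an almost-monotonicity statement, blow-ups to the classical obstacle problem, then elliptic estimates to pass from growth to the gradient bound) is the same philosophy as the paper, but the two steps where the actual work lies are missing or would fail as written. First, your potential is mis-specified: for \eqref{eq:main2} the variational density satisfies $F'(s)=-\l_+\log s$ on $\{s>0\}$, so $F(s)=\l_+ s(1-\log s)\sim s|\log s|$, not $F'(s)=-\l_+ s\log s$ with $F\sim s^2|\log s|$; hence $2F(s)-sF'(s)\sim s|\log s|$, the quadratic cancellation you invoke is absent, and the error in the differentiated Weiss energy is \emph{first order} in $u$. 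Because of this your bootstrap cannot start: if one only knows $\sup_{B_r}|u|\le r^{\g}$ with $\g<2$, then $u_r=u(r\cdot)/(r^2(1-2\log r))$ can be of size $r^{\g-2}/|\log r|$, and the dangerous contribution $\sim r^{-1}|\log r|^{-2}\int_{B_1}|u_r|(\log|u_r|)_+$ is of order $(2-\g)/(r|\log r|)$ times $\int_{B_1}|u_r|$, which is \emph{not} integrable in $r$; Remark \ref{rem:modulus} makes exactly this point (``not for $\w(t)=t^{\a}$ with $\a<2$''). Your proposed fix---make the formula ``self-improving'' via an epiperimetric-type step---is precisely the part you leave open, and it is not how the paper closes the loop: the paper first proves, independently of any monotonicity formula, the nearly optimal estimate $|\n u(x)-\n u(y)|\le C|x-y|(1+|\log|x-y||^2)$ (Lemma \ref{lem:log2reg}) by a harmonic-approximation/Campanato iteration using only the log-Lipschitz modulus of $F$; only the resulting growth $\w(r)\approx r^2|\log r|^2$ makes the Weiss error $\nu(r)\approx \log|\log r|/(r|\log r|^2)$ integrable, after which Theorem \ref{thm:logreg}, Lemma \ref{lem:BMOest} and a Chebyshev argument give the optimal growth in a single pass (no value of $W(0^+)$, no epiperimetric inequality, and no nondegeneracy are needed for Theorem \ref{thm:main1}).

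Second, the passage from growth to the gradient estimate is not routine in the two-phase setting. The optimal growth of Corollary \ref{cor:optimalgrowth} holds only at points where both $|u|$ and $|\n u|$ are suitably small, while in the two-phase problem there are zero points at which $|\n u|$ is of order one: there $u$ is merely linear and the right-hand side $\log|u^\pm|$ is singular on the zero level surface passing through the point. Neither of your devices covers this case: differentiating the equation produces the term $\l_\pm\,\p_e u/u$, which the introduction flags as intractable, and interior Schauder at scale $d=\dist(x,\{u=0\})$ only yields $\|D^2u\|\lesssim 1/d$ unless one already knows that $u$ is well approximated by an affine function at scale $d$---which is circular, being essentially the statement to be proved. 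The paper handles this regime with Lemma \ref{lem:biggrad}: a hodograph-type change of variables $(x',x_n)\mapsto(x',u)$ that turns the right-hand side into a function of the single variable $y_n$, partial Schauder estimates for the transformed equation to control all second derivatives except $v_{nn}$, which is then read off from the equation, followed in the proof of the main theorem by a stopping-scale case analysis according to whether $|u_r(0)|$ or $|\n u_r(0)|$ first becomes of order one. Without an argument at such points your scheme does not give the stated estimate for all $x,y\in B_{1/2}$.
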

The proof follows an approach to regularity using a Weiss-type monotonicity formula, somewhat like in \cite{ASUW}. The major difference is that it is not actually clear that the Weiss energy associated to this problem is almost monotone \emph{a priori}; unless one already knows the optimal regularity of solutions, there is potentially a non-integrable error term in the monotonicity formula. Our argument, therefore, is simultaneously proving the monotonicity of the appropriate energy and the regularity of $u$, not just using the former to establish the latter. Moreover, to make this work we also have to use a nearly-optimal regularity result for $u$, Lemma \ref{lem:log2reg}, within the argument to estimate some of the problematic terms in the monotone quantity.

While we plan to study the regularity of the free boundary for \eqref{eq:main1}, \eqref{eq:main2} and related questions in future work, this seems to be more delicate and is not treated here.

The structure of this paper is as follows: Section \ref{sec:prelim} establishes notation and collects useful results readily available in the literature. Then Section \ref{sec:subopt} covers straightforward suboptimal regularity results which are nonetheless needed in later sections. In Section \ref{sec:growth} we prove the key growth lemma, which contains the main ideas of this note. Finally, Section \ref{sec:proofmain} gives a proof of Theorem \ref{thm:main1} using this growth lemma and some PDE techniques, while Section \ref{sec:nondegeneracy} presents a nondegeneracy property that shows that Theorem \ref{thm:main1} is essentially optimal.

\section{Preliminary Analysis} \label{sec:prelim}

\subsection{Definitions and Notation}

Let $\W$ be a smooth domain, and consider minimizers to the functional
\[
E(u; \W) = \int_{\W} \frac{1}{2}|\n u|^2 + F(u),
\]
where
\[
F(t) = \begin{cases}
(\l_+ t_+ + \l_- t_-) (1 - \log |t|) & t\neq 0 \\
0 & t = 0 .
\end{cases}
\]
In particular, we say that  \emph{$u$ is a minimizer of $E$ on $\W$} if $E(u; \W) \leq E(v; \W)$ among all functions $v$ in $H^1(\W)$ with $v - u \in H^1_0(\W)$. Given a function $v \in H^1(\W)$, it is not difficult to see that $E$ admits a minimizer $u$ to $E$ with $u - v\in H^1_0(\W)$ using the direct method, but it is not clear whether or not $u$ is a unique minimizer. We do not address this question of uniqueness here, but our results apply to any minimizer. We will use the notation $E(u)$ for $E$ where the choice of domain $\W$ is clear from the context. Since our analysis is mainly local we will generally assume $\Omega = B_1(0)$.

Minimizers of $E$ will be solutions of \eqref{eq:main2} on $B_1$, in the weak sense. They will also be analytic functions on $\{u \neq 0\}$. Note that, on the other hand, it is not clear that every solution to $\eqref{eq:main2}$ is a minimizer of $E$, as (unlike with the classical obstacle problem) the functional $E$ is not convex. We will deal only with minimizers in this paper.

Let us define the following rescaled functions and rescaled $E$, for $r<1$:
\[
u_r(x) = \frac{u(r x)}{r^2(1 - 2\log r)}
\]
and
\[
E_r(v; \W) = \int_{\W} \frac{1}{2} |\n v|^2 + F_r(v),
\]
where
\[
F_r(t) = \begin{cases}
(\l_+ t_+ + \l_- t_-) (1 - \frac{\log |t|}{1 - 2 \log r} - \frac{\log (1 - 2\log r)}{1 - 2\log r}) & t \neq 0\\
0 & t = 0.
\end{cases}
\]
These have the following property: if $u$ minimizes $E$ on $B_1$, then $u_r$ minimizes $E_r$ on $B_1$. Note that at least when evaluated on a smooth, fixed $v$, the last two terms of $E_r$ tend to zero as $r\rightarrow 0$, leading to
\[
E_0(v; \W) = \int_{\W} \frac{1}{2} |\n v|^2 + F_0(v). 
\]
where
\[
F_0(t) = \l_+ t_+ + \l_- t_-.
\]
This, however, is a convex functional whose minimizers coincide with solutions to the two-phase obstacle problem
\begin{equation}\label{eq:2phaseobst}
\Delta u = \l_+ 1_{\{u>0\}} - \l_- 1_{\{u < 0\}}.
\end{equation}

\subsection{Basic Regularity of Solutions}

In \cite{GG}, the authors show (a much more general version of) the following:
\begin{proposition} \label{prop:C1a}
	Let $u$ be a minimizer of
	\[
	G(u) = \int_{B_1} \frac{1}{2}|\n u|^2 + g(u),
	\]
	where $|g(t) - g(s)|\leq C_0 |t - s|^{\a_0}$. Then
	\[
	\|u\|_{C^{1,\a_1}(B_{1/2})} \leq C(n, C_0, \a_0, \|u\|_{L^2(B_1)}),
	\]
	where $\a_1$ depends only on $\a_0$.
\end{proposition}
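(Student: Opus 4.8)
The plan is to establish this through a De Giorgi–Nash–Moser type iteration combined with Schauder-style perturbation estimates, treating the nonlinearity $g(u)$ as a controllable lower-order term. First I would note that a minimizer $u$ is a weak solution of $\Delta u = g'(u)$ in the distributional sense only formally, since $g$ need only be Hölder, not differentiable; so instead I would work directly with the minimization property. The initial step is an $L^\infty$ bound: by comparing $u$ with truncations $\min(u, k)$ and $\max(u, -k)$ as competitors in $G$, and using the Hölder bound $|g(t) - g(s)| \leq C_0|t-s|^{\a_0}$ to control the change in the potential term, one obtains a Caccioppoli inequality on superlevel sets, and then the standard De Giorgi iteration yields $\|u\|_{L^\infty(B_{3/4})} \leq C(n, C_0, \a_0, \|u\|_{L^2(B_1)})$. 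The Hölder hypothesis on $g$ is exactly what makes the potential term subcritical relative to the Dirichlet energy, so this step should go through as in the classical theory.

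Next, with $u$ now bounded, I would upgrade to interior Hölder continuity of $u$ itself. Again using the minimality and comparing with harmonic replacements on small balls, the Dirichlet energy of the difference $u - h$ (where $h$ is the harmonic function with the same boundary data on $B_\rho(x_0)$) is controlled by $\int_{B_\rho} (F(u) - F(h))$, which is bounded by $C \rho^n$ since $g$ is bounded on the range of $u$. A Morrey/Campanato-type decay argument on $\int_{B_\rho}|\n u|^2$ then gives $u \in C^{0,\beta}$ for every $\beta < 1$, hence in particular $g(u) \in C^{0, \a_0 \beta} \subseteq L^\infty$. At this stage $u$ solves $\Delta u = f$ with $f = g(u) \in C^{0,\gamma}$ for some $\gamma > 0$ — here one must be slightly careful that $g(u)$ is genuinely the right-hand side; this follows from the Euler–Lagrange equation for the minimizer, valid distributionally as $g(u) \in L^\infty$ and the first variation of $\int F(u)$ making sense. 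Then interior Schauder estimates for the Poisson equation give $u \in C^{2,\gamma}_{loc}$, and in particular $\|u\|_{C^{1,\a_1}(B_{1/2})} \leq C$ with $\a_1 = \a_0 \beta$ for a suitable choice of $\beta < 1$, and all constants tracked through the iteration depend only on $n$, $C_0$, $\a_0$, and $\|u\|_{L^2(B_1)}$.

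The main obstacle I anticipate is the first step — passing from $L^2$ to $L^\infty$ — and more precisely making the comparison-function argument rigorous when $g$ is only Hölder and possibly non-monotone, so that the functional $G$ is not convex and minimizers are not unique. One cannot simply invoke maximum-principle comparison; instead the truncation competitor argument must be used, and one needs that inserting a truncation strictly decreases (or does not increase) $G$, which requires estimating $\int_{\{u > k\}} (g(k) - g(u))$ against the gained Dirichlet energy $\int_{\{u>k\}} |\n u|^2$ via the Hölder modulus and Sobolev/Poincaré on the level set. This is the standard mechanism, but the bookkeeping of constants — ensuring the final bound depends only on the quantities listed and not, say, on a modulus of continuity of $g$ beyond its Hölder seminorm — is where the care lies. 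Since the paper attributes this to \cite{GG} as a special case of a more general theorem, I would in practice cite it directly; the sketch above is how one would reconstruct it if needed.
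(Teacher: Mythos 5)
Your first two steps are sound and follow the standard route: the truncation/De Giorgi argument for the $L^\infty$ bound and the harmonic-replacement plus Morrey/Campanato decay for $u\in C^{0,\beta}$, $\beta<1$, are exactly the mechanism of the cited references (the paper itself gives no proof of Proposition \ref{prop:C1a}, it simply invokes \cite{GG}; the same scheme is then reproduced in Section \ref{sec:subopt}, Lemmas \ref{lem:harmonicapprox}--\ref{lem:log2reg}, for the log-Lipschitz modulus). The genuine gap is your final step. A minimizer of $G$ does not satisfy $\Delta u = g(u)$: the formal Euler--Lagrange equation is $\Delta u = g'(u)$, and since $g$ is only $C^{0,\alpha_0}$ this equation is simply not available -- the first variation of $\int g(u)$ has no distributional meaning, so there is no Poisson equation with right-hand side $g(u)$ to feed into Schauder theory. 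Moreover the conclusion you would draw, $u\in C^{2,\gamma}_{loc}$, is false in general for this class of functionals: for $g(t)=\lambda_+ t_+ + \lambda_- t_-$ the minimizers are solutions of the two-phase obstacle problem \eqref{eq:2phaseobst}, which are $C^{1,1}$ but not $C^2$ across the free boundary, and there $\Delta u = \lambda_+ 1_{\{u>0\}} - \lambda_- 1_{\{u<0\}} \neq g(u)$; for $g(t)=(t_+)^{\alpha_0}$ (Alt--Phillips) the equation on $\{u>0\}$ has the unbounded right-hand side $\alpha_0 u^{\alpha_0-1}$. So the detour through a PDE cannot close the argument, and $C^{1,\alpha_1}$ is in fact the optimal conclusion one should aim for.

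The repair stays entirely within the variational scheme you already set up, pushed one level higher. Once $u\in C^{0,\beta}(B_{3/4})$, compare $u$ on $B_r(x_0)$ with its harmonic replacement $h$: minimality and the H\"older bound on $g$ give $\int_{B_r}|\n(u-h)|^2 \leq 2\int_{B_r}(g(h)-g(u)) \leq C r^{\,n+\alpha_0\beta}$, since $|u-h|\leq \osc_{B_r}u \leq C r^\beta$ by the maximum principle. Then run the Campanato iteration on the gradient: using $\int_{B_\rho}|\n h - \n h(0)|^2 \leq (\rho/r)^{n+2}\int_{B_r}|\n h - \n h(0)|^2$ (as in Lemma \ref{lem:harmonic}) together with the excess estimate above, one gets $\int_{B_\rho}|\n u - \fint_{B_\rho}\n u|^2 \leq C\rho^{\,n+\alpha_0\beta}$ for all small $\rho$, hence $\n u \in C^{0,\alpha_1}$ with $\alpha_1=\alpha_0\beta/2$ by Campanato's characterization, with constants depending only on $n$, $C_0$, $\alpha_0$, $\|u\|_{L^2(B_1)}$. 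This is precisely the argument of Lemmas \ref{lem:harmonicapprox} and \ref{lem:log2decay} with $\eta(t)=C_0 t^{\alpha_0}$ in place of the log-Lipschitz modulus; alternatively, citing \cite{GG} directly, as the paper does, is of course legitimate.
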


Note that the function $F$ does not actually satisfy the assumptions here in general: while $F(t)$ is locally H\"older continuous, it grows like $|t||\log |t||$ for large $|t|$ and so does not admit a uniform modulus. It does, however, satisfy the assumptions so long as $\sup_{B_1} |u|$ is bounded, and so the proposition may be applied with extra dependence on $\sup_{B_1}|u|$. The following proposition gives an estimate on this quantity, based only on the following fact about $F_r$:
\[
|F_r(t)| \leq C |t| (1 + |\log |t|| ) \leq C_*(1 + t^2).
\]
Using this, we may apply the results of Section 2 and 3 of \cite{GG2} to obtain:
\begin{proposition}\label{prop:degiorgi}
	Let $u$ be a minimizer of $G$ on $B_1$, with $|g(u)| \leq C_*(1 + t^2)$. Then there is a constant $\a$ depending only on $n$ such that
	\[
	\|u\|_{C^{0, \a}(B_{1/2})} \leq C(C_*, n, \|u\|_{L^2(B_1)}).
	\]
\end{proposition}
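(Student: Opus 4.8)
The statement is a De Giorgi type regularity estimate for minimizers of the quadratic-growth functional $G$, and the plan is to reduce it to the classical theory: I would show that any minimizer $u$ belongs to a De Giorgi class $DG^{\pm}(B_1)$ whose structural constants depend only on $n$, $C_*$ and $\|u\|_{L^2(B_1)}$, and then invoke the standard local-boundedness and oscillation-decay estimates for such classes, which is precisely what Sections~2--3 of \cite{GG2} provide (in the still more general setting of quasi-minima).

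For the reduction, fix concentric balls $B_\rho \subset B_R \subset\subset B_1$, a level $k$, and a cutoff $\eta\in C^\infty_c(B_R)$ with $\eta\equiv 1$ on $B_\rho$ and $|\n\eta|\le 2/(R-\rho)$, and compare the minimality of $u$ against the competitor $v = u - \eta^2(u-k)_+$ (which equals $u$ off $B_R$, so $v-u\in H^1_0(B_1)$). Expanding $|\n v|^2$ and absorbing the cross term by Young's inequality gives an inequality of the form
\[
\int_{B_\rho\cap\{u>k\}}|\n u|^2 \le \frac{C(n)}{(R-\rho)^2}\int_{B_R\cap\{u>k\}}(u-k)^2 + C(n)\int_{B_R\cap\{u>k\}}\big(g(v)-g(u)\big).
\]
On $\{u>k\}$ one has $v=(1-\eta^2)u+\eta^2 k$, which lies between $k$ and $u$, so $v^2\le 2u^2+2k^2$; the hypothesis $|g(t)|\le C_*(1+t^2)$ then yields $g(v)-g(u)\le C(C_*)(1+k^2+(u-k)^2)$ there. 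Substituting produces
\[
\int_{B_\rho\cap\{u>k\}}|\n u|^2 \le \frac{C}{(R-\rho)^2}\int_{B_R\cap\{u>k\}}(u-k)^2 + C(1+k^2)\,\big|B_R\cap\{u>k\}\big|,
\]
with $C=C(n,C_*)$; testing instead against $v=u+\eta^2(u-k)_-$ gives the companion inequality on $\{u<k\}$. These are exactly the defining inequalities of $DG^+(B_1)$ and $DG^-(B_1)$.

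With the De Giorgi inequalities in hand, I would first run the standard level-set iteration on shrinking balls about an arbitrary point of $B_{1/2}$ and truncation levels increasing to some $M$; the starting term is bounded by $\int u^2$, which is where the dependence on $\|u\|_{L^2(B_1)}$ enters, and the inhomogeneous term $C(1+k^2)|A_{k,R}|$ does not obstruct the iteration because the exponent $2$ is subcritical. This yields $\sup_{B_{1/2}}|u|\le C(n,C_*,\|u\|_{L^2(B_1)})$. Since $u$ is now bounded, the levels relevant to the oscillation-decay argument satisfy $|k|\le M$, so the term $C(1+k^2)|A_{k,r}|\le C(M)r^n$ is a lower-order perturbation of the gradient term on small balls $B_r(x_0)\subset B_{1/2}$, and the classical De Giorgi oscillation-decay lemma gives $\osc_{B_r(x_0)}u\le Cr^\alpha$ with $\alpha=\alpha(n)\in(0,1)$. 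The one place that genuinely needs attention — and the only real obstacle — is this quadratic, rather than $(u-k)$-linear, forcing term: one must check both that the coefficient $1+k^2$, growing with the truncation level, does not break the boundedness iteration, and that it is negligible in the Hölder step. This is precisely where subcriticality of the exponent is used, and the reason the final constant depends on $\|u\|_{L^2(B_1)}$; it is carried out in \cite{GG2}, and the computation above shows that minimizers of $G$ are covered by that analysis.
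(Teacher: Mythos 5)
Your proposal is correct and follows essentially the same route as the paper, which simply invokes Sections 2--3 of \cite{GG2} after noting that the proof amounts to verifying a Caccioppoli inequality for truncations and running De Giorgi's iteration. Your explicit derivation of the De Giorgi class inequalities from minimality, together with the observation that the quadratic growth $|g(t)|\leq C_*(1+t^2)$ is subcritical (so the $(1+k^2)|A_{k,R}|$ term does not obstruct either the boundedness or the oscillation-decay step), is precisely the content of that citation.
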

The proof there is based on directly verifying a Cacciopoli inequality and then applying De Giorgi's technique.

A straightforward consequence of the above propositions is the following lemma:
\begin{lemma}\label{lem:conv}
	Let $u_k$ be minimizers of $G_{k}(u) = \int \frac{1}{2}|\n u|^2 +g_k(u)$ on $B_1$ with 
	\[
		\sup_k \|u_k\|_{L^2(B_1)} \leq C < \8.
	\]
	 Assume that $g_k$ satisfy $|g_k(t) - g_k(s)|\leq C |t - s|^{\a_0}$ uniformly in $k$, and converge to $g(t)$ locally uniformly. Then, along a subsequence, $u_k \rightarrow u$ on $B_{1/2}$ in $C^{1,\a}$ topology for some $\a>0$, and $u$ is a minimizer of $G(u) = \int \frac{1}{2}|\n u|^2 +g(u)$.
	
	In particular, if $g_k = F_{r_k}$ with $r_k \searrow 0$, and $|u|\leq C$, the assumption on $g_k$ is verified and $u$ solves \eqref{eq:2phaseobst}.
\end{lemma}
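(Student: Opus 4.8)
The plan is to prove Lemma~\ref{lem:conv} by combining the two regularity estimates just recorded (Propositions~\ref{prop:C1a} and~\ref{prop:degiorgi}) with a standard direct-method/$\Gamma$-convergence argument. First I would extract the uniform bound: since $\sup_k \|u_k\|_{L^2(B_1)} \le C$, Proposition~\ref{prop:degiorgi} is \emph{not} directly applicable to each $u_k$ unless one knows $|g_k(t)| \le C_*(1+t^2)$; however the hypothesis $|g_k(t)-g_k(s)|\le C|t-s|^{\a_0}$ together with local uniform convergence of $g_k$ to $g$ gives, for $|t|\le 1$ say, $|g_k(t)| \le |g_k(0)| + C \le C'$, and $g_k(0)$ is bounded because $g_k \to g$ pointwise; growth for large $|t|$ is only H\"older, hence certainly at most quadratic, so $|g_k(t)| \le C_*(1+t^2)$ uniformly in $k$. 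Then Proposition~\ref{prop:degiorgi} yields $\|u_k\|_{C^{0,\a}(B_{3/4})} \le C$, which in particular bounds $\sup_{B_{3/4}}|u_k|$. With this $L^\infty$ bound in hand, Proposition~\ref{prop:C1a} applies on $B_{3/4}$ (with constants depending on the now-controlled sup norm) and gives $\|u_k\|_{C^{1,\a_1}(B_{1/2})} \le C$ uniformly in $k$.

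Next I would pass to a subsequence: by Arzel\`a--Ascoli the uniform $C^{1,\a_1}(B_{1/2})$ bound gives a subsequence (not relabeled) with $u_k \to u$ in $C^{1,\a}(B_{1/2})$ for any $\a < \a_1$, and $u \in C^{1,\a_1}(B_{1/2})$. It remains to show $u$ minimizes $G$. Here I would use the minimality of each $u_k$: for any competitor $v$ with $v - u \in H^1_0(B_{1/2})$, set $v_k = v + (u_k - u)$, so that $v_k - u_k \in H^1_0(B_{1/2})$ and $v_k \to v$ in $H^1$ (using $u_k \to u$ in $C^1$, hence in $H^1$, on $B_{1/2}$). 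Then $G_k(u_k) \le G_k(v_k)$. On the right, $\int \frac12|\n v_k|^2 \to \int \frac12 |\n v|^2$ and $\int g_k(v_k) \to \int g(v)$ by dominated convergence, using local uniform convergence $g_k \to g$, the uniform H\"older/quadratic bound on $g_k$, and the uniform $L^\infty$ (or $L^2$) control on $v_k$ coming from $v \in H^1 \cap L^\infty$ and the $C^1$ convergence of $u_k - u$ to $0$. On the left, $\int \frac12 |\n u_k|^2 \to \int \frac12|\n u|^2$ and $\int g_k(u_k) \to \int g(u)$ by the same reasoning (now $u_k \to u$ in $C^1$ directly). Hence $G(u) \le G(v)$, i.e. $u$ is a minimizer.

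For the final sentence of the lemma: when $g_k = F_{r_k}$ with $r_k \searrow 0$, the bound $|u_k|\le C$ is assumed, and one checks directly from the formula for $F_r$ that $F_{r_k}$ is uniformly H\"older on $\{|t|\le C\}$ --- indeed for $|t|,|s|\le C$ the factor $(1 - \frac{\log|t|}{1-2\log r} - \frac{\log(1-2\log r)}{1-2\log r})$ is bounded and the whole expression inherits the H\"older modulus of $t \mapsto t_\pm\log|t|$ on $\{|t|\le C\}$, uniformly in $r \le r_0$. Moreover $F_{r_k}(t) \to F_0(t) = \l_+ t_+ + \l_- t_-$ locally uniformly, as already observed in the text. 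Applying the first part of the lemma with $g = F_0$ gives a subsequential $C^{1,\a}$ limit $u$ minimizing $E_0$, and since $F_0$ is convex its minimizers are exactly the weak solutions of $\Delta u = \l_+ 1_{\{u>0\}} - \l_- 1_{\{u<0\}}$, which is \eqref{eq:2phaseobst}.

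The main obstacle I anticipate is bookkeeping around the growth of $g_k$: the H\"older hypothesis only controls \emph{increments}, so some care is needed to promote it to the pointwise quadratic bound required by Proposition~\ref{prop:degiorgi}, and to ensure the dominating function in the convergence $\int g_k(v_k)\to\int g(v)$ is genuinely $L^1$ and independent of $k$. This is routine but is the one place where the argument could go wrong if, for instance, $g_k(0)$ were allowed to be unbounded; the local uniform convergence hypothesis is exactly what rules this out. Everything else is a standard compactness-plus-lower-semicontinuity argument, made easier here because the $C^{1,\a}$ compactness gives strong $H^1$ convergence, so no lower semicontinuity subtlety (e.g. for the gradient term) actually arises.
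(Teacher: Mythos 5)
Your proposal is correct and follows the same overall strategy as the paper (uniform interior estimates from the stated propositions, Arzel\`a--Ascoli, then passing minimality to the limit), but two steps are genuinely different. First, your detour through Proposition \ref{prop:degiorgi} to produce an $L^\infty$ bound before invoking Proposition \ref{prop:C1a} is unnecessary: the lemma's hypothesis $|g_k(t)-g_k(s)|\le C|t-s|^{\a_0}$ is verbatim the hypothesis of Proposition \ref{prop:C1a}, whose constant depends only on $n$, $C$, $\a_0$ and $\|u_k\|_{L^2(B_1)}$, so the paper applies it directly on $B_1$ to get the uniform $C^{1,\a_1}(B_{1/2})$ bound (the sup-norm caveat in Section \ref{sec:prelim} concerns $F$ itself, not a uniformly H\"older $g_k$); your extra step is correct but buys nothing, and note that applying Proposition \ref{prop:C1a} on $B_{3/4}$ literally yields an estimate on $B_{3/8}$ unless you add a covering argument. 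Second, for the limit of the minimality you take the translated competitor $v_k=v+(u_k-u)$, so that $v_k-u_k=v-u\in H^1_0(B_{1/2})$ exactly, whereas the paper reduces to competitors with $\supp(v-u)\cc B_{1/2}$ and glues with a cutoff $v_k=\eta v+(1-\eta)u_k$, shrinking the cutoff region afterwards. Your route avoids the cutoff and the choice of $\r$, but it must then handle arbitrary, possibly unbounded, $H^1$ competitors; this does go through, and you should state the domination explicitly rather than hedging with ``$v\in H^1\cap L^\infty$ (or $L^2$)'': since $|g_k(v)|\le|g_k(0)|+C|v|^{\a_0}\le C(1+|v|^2)\in L^1(B_{1/2})$ and $g_k\to g$ pointwise, dominated convergence gives $\int g_k(v)\to\int g(v)$, while $|g_k(v_k)-g_k(v)|\le C\|u_k-u\|_{L^\infty(B_{1/2})}^{\a_0}\to 0$; the paper's reduction to smooth compactly perturbed competitors is itself only asserted, so the two arguments are of comparable rigor. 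One small slip in your last paragraph: the factor $1-\frac{\log|t|}{1-2\log r}-\frac{\log(1-2\log r)}{1-2\log r}$ is \emph{not} bounded as $t\to0$; the correct statement, which is what you actually use, is that its product with $\l_+t_++\l_-t_-$ is uniformly $C^{0,\a_0}$ on $\{|t|\le C\}$ because $1-2\log r\ge 1$, matching the paper's verification on the relevant range of the rescaled solutions before identifying \eqref{eq:2phaseobst} as the Euler--Lagrange equation of the convex functional $E_0$.
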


\begin{proof}
	Applying Proposition \ref{prop:C1a}, we have that
	\[
	\|u_k\|_{C^{1,\a_1}(B_{1/2})} \leq C
	\]
	uniformly in $r$. This immediately gives the convergence along a subsequence to a function $u$.
	
	Next, we note that to show that $u$ minimizes $G$, it suffices to check that for any $v \in C^{\8}(B_{1/2})$ and $\supp(v-u) \cc B_{1/2}$, we have that $G(v; B_{1/2}) \geq G(u; B_{1/2})$. To that end, let $\eta$ be a smooth cutoff which is equal to one on $B_{1/2 - \r}$ and vanishes on $\p B_{1/2}$. Set $v_k = \eta v + (1-\eta) u_k$; this is a valid competitor for $G_k$, so
	\[
	G_k(v_k; B_{1/2}) \geq G_k(u_k; B_{1/2}).
	\]
	Set $v_\8 = \eta v + (1-\eta) u$.

	Now, as $\n u_k \rightarrow \n u$, $g_k(u_k) \rightarrow g(u)$, and $g_k(v_k) \rightarrow g(v_\8)$ uniformly, we have that
	\[
	G(v_\8; B_{1/2}) \geq G(u; B_{1/2}).
	\]
	Choosing $\r$ small enough, we see that $v_\8 = v$, and this implies the conclusion.
	
	Finally, observe that the integrands $F_r$ satisfy $|F_r(t) - F_r(s)| \leq C_0 |t - s|^{\a_0}$ for any $\a_0 <1$ and $C_0$ independent of $r$ if $t, s < C/{r^2(1 - 2 \log r)}$, and converge uniformly to $F_0$ as $r\rightarrow 0$. The last conclusion follows by noting that \eqref{eq:2phaseobst} is the Euler-Lagrange equation for an $E_0$ minimizer. 
\end{proof}

Recall that minimizers of $G$ need not be unique (except in the case of $G = F_{0}$, which is convex). As such, this lemma should not be thought of as a stability property for minimizers of $G$ but rather a closure property for minimizing families. Our intended use for it is in compactness and blow-up arguments.

\section{Suboptimal Regularity} \label{sec:subopt}

The function $F_r$ is continuous, and in fact satisfies
\[
|F_r(t) - F_r(s)| \leq C |t - s| (1 + |\log |t - s||)
\]
uniformly in $r$. Indeed, when $|t - s| > |s|/2$, this follows from
\begin{align*}
|F_r(t) - F_r(s)| &\leq |F_r(t)| + |F_r(s)| \\
 &\leq C[|t| (1 + |\log |t||) + |s| (1 + |\log |s||)] \\
 &\leq C |t - s| (1 + |\log |t - s||),
\end{align*}
where the last inequality used that $|t| \leq |s| + |t - s| < 3 |t - s|$. On the other hand, if $|t -s | \leq \min\{|t|, |s|\}/2$, this implies that $t$ and $s$ have the same sign and
\begin{align*}
|F_r(t) - F_r(s)| & \leq |t-s|\max_{z\in [s,t]}|F'_r(z)|\\
& \leq C |t - s| (1 + \max\{|\log |s||, |\log |t||\})\\
& \leq C |t - s| (1 + |\log |t - s||).
\end{align*}

Setting
\[
\eta(t) = C|t|(1 + |\log |t||)
\]
for the remainder of this section, we can obtain an optimal regularity estimate for minimizers of
\[
G(u) = \int \frac{1}{2}|\n u|^2 +g(u),
\]
where 
\begin{equation}\label{eq:loglip}
|g(t) - g(s)| \leq \eta(t -s)
\end{equation}
This is not the optimal regularity for minimizers of $F$ and $F_r$, which will be discussed in the next section, but surprisingly we will require it anyway. The proof is a simple application of the methods of \cite{GG}.

The only properties of $\eta$ which are needed below are:
\begin{enumerate}
	\item $\eta :[0, 1] \rightarrow [0, C]$ is an increasing continuous bijection.
	\item $\eta$ is concave.
	\item $t \mapsto \frac{\eta(t)}{t}$ is a nonincreasing function.
	\item $ \eta(t) \leq C t^{\frac{1}{2} + \a}$ for some $C, \a$ and all $t \in [0, 1]$.
\end{enumerate}
Note that (1) and (3) imply that $c t \leq \eta(t)$ for $t \leq 1$.

\begin{lemma} \label{lem:harmonicapprox}
	Let $u$ be a minimizer of $G$ on $B_r$, $r\leq 1$, with $G$ satisfying \eqref{eq:loglip}. There is a $c_*(\eta)$ such that if $\osc_{B_r} u\leq c_*$ and $h$ is a harmonic function with the same boundary values as $u$ along $\p B_r$, then
	\[
	\fint_{B_r}|\n (u - h)|^2 \leq C \frac{\eta^2(r^2)}{r^2}.
	\]
\end{lemma}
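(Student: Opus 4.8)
The plan is to compare $u$ with its harmonic replacement $h$ on $B_r$ using the energy-minimality of $u$, exactly in the spirit of De Giorgi–Almgren harmonic approximation as in \cite{GG}. Since $h$ is harmonic with the same boundary data as $u$, it is the $L^2$-projection of $\n u$ onto gradients of harmonic functions, so integration by parts gives the orthogonality identity
\[
\int_{B_r} |\n(u-h)|^2 = \int_{B_r} |\n u|^2 - \int_{B_r}|\n h|^2 = \int_{B_r}|\n u|^2 - \int_{B_r} \n u \cdot \n h.
\]
Thus it suffices to bound $\int_{B_r}|\n u|^2 - \int_{B_r}|\n h|^2$ from above. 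The minimality of $u$ (against the competitor $h$, which shares its boundary values) yields
\[
\int_{B_r} \tfrac12 |\n u|^2 + g(u) \leq \int_{B_r} \tfrac12|\n h|^2 + g(h),
\]
so that
\[
\tfrac12\int_{B_r} |\n(u-h)|^2 = \tfrac12\int_{B_r}|\n u|^2 - \tfrac12\int_{B_r}|\n h|^2 \leq \int_{B_r} \bigl( g(h) - g(u) \bigr) \leq \int_{B_r} \eta(|u-h|),
\]
using hypothesis \eqref{eq:loglip}.

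The next step is to turn the right-hand side into something controlled by $\eta^2(r^2)/r^2$ times $|B_r|$. Here I would use the structural properties of $\eta$: concavity (property (2)) gives, via Jensen, $\fint_{B_r}\eta(|u-h|) \leq \eta\bigl(\fint_{B_r}|u-h|\bigr)$, and then property (3) — that $t\mapsto \eta(t)/t$ is nonincreasing — lets me write $\eta(s) = s\cdot\frac{\eta(s)}{s}$ and extract a power of the small quantity $\fint_{B_r}|u-h|$ while keeping an $\eta$-factor. The quantity $\fint_{B_r}|u-h|$ (or its $L^2$ average) is itself controlled by $\fint_{B_r}|\n(u-h)|^2$ via Poincaré, since $u-h\in H^1_0(B_r)$: $\fint_{B_r}|u-h|^2 \leq C r^2 \fint_{B_r}|\n(u-h)|^2$. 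Feeding this back into the inequality above gives a self-improving relation of the form $X \leq C\,\eta\bigl(C r \sqrt{X}\bigr)$ where $X = \fint_{B_r}|\n(u-h)|^2$; together with the smallness hypothesis $\osc_{B_r} u \leq c_*$ (which, combined with maximum principle bounds on $h$, forces $|u-h|$ small and hence keeps us in the regime $|u-h|\leq 1$ where the properties of $\eta$ apply), one solves this inequality to conclude $X \leq C\eta^2(r^2)/r^2$. The role of property (4), $\eta(t)\leq Ct^{1/2+\a}$, is to make the fixed-point/bootstrap argument close with a genuine gain rather than a borderline estimate, and to guarantee the relevant integrals are finite.

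The main obstacle I anticipate is the bookkeeping in the self-improving inequality: one must track how the $\eta$ of an average relates to the average of $\eta$ and to powers of $r$, and arrange the comparison so that the exponents line up to give precisely $\eta^2(r^2)/r^2$ rather than a weaker bound. The concavity and the monotonicity of $\eta(t)/t$ are exactly the two hypotheses that make this possible; getting from $X\lesssim \eta(Cr\sqrt X)$ to $X\lesssim \eta^2(r^2)/r^2$ requires using $\eta(ab)\le \eta(a)\cdot(\text{something in }b)$-type manipulations carefully, and verifying that the smallness constant $c_*$ can be chosen depending only on $\eta$ (and not on $r$ or $u$) so that the argument is uniform. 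A secondary technical point is justifying that $u-h\in H^1_0(B_r)$ and that $h$ inherits the oscillation bound of $u$ up to a universal constant, so that all quantities genuinely stay in $[0,1]$ where properties (1)–(4) were stated.
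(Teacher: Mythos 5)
Your proposal is correct and follows essentially the same route as the paper: harmonic replacement as competitor, the orthogonality identity, \eqref{eq:loglip} together with concavity/Jensen and a Poincar\'e--Sobolev step to reach the self-improving inequality $A \le C\,\eta(Cr\sqrt{A})$ for $A=\fint_{B_r}|\n(u-h)|^2$, with the smallness $\osc_{B_r}u\le c_*$ and the maximum principle keeping $|u-h|$ in the range where the properties of $\eta$ apply. The only step you leave implicit is the paper's closing dichotomy: either $A \le r^2/C^2$, which is already $\le C\,\eta^2(r^2)/r^2$ since $\eta(t)\ge ct$, or $Cr\sqrt{A}\ge r^2$, in which case property (3) gives $\eta(Cr\sqrt{A})/(Cr\sqrt{A})\le \eta(r^2)/r^2$ and hence $A\le C\,\eta^2(r^2)/r^2$; note that property (4) is not actually needed for this lemma (it enters only later, for the finiteness of $\eta_1$).
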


\begin{proof}
	As $h$ is harmonic, $\int_{B_r} \n h \cdot \n(u - h) = 0$, so
	\[
	\int_{B_r} |\n (u-h)|^2 = \int_{B_r} \n (u + h) \cdot \n (u - h) = \int_{B_r} |\n u|^2  - |\n h|^2.
	\]
	Using $h$ as a competitor for $u$ in the minimization of $G$ gives $G(u) \leq G(h)$, and so
	\begin{equation}\label{eq:harmonicapprox1}
	\int_{B_r} |\n u|^2  - |\n h|^2 \leq 2 \int_{B_r} g(h) - g(u) \leq 2\int_{B_r}\eta(h -u).
	\end{equation}
	From the assumption on the oscillation of $u$ and the maximum principle, $|u-h|\leq 2c_* < 1$, while the modulus $\eta$ is concave. This can be used to show that
	\begin{equation}\label{eq:harmonicapprox2}
	\frac{1}{|B_r|}\int_{B_r}\eta(h -u) \leq \eta(\frac{1}{|B_r|}\int_{B_r}|h -u|) \leq \eta((\frac{1}{|B_r|} \int_{B_r} |h - u|^{\frac{2n}{n-2}})^{\frac{n-2}{2n}}) \leq \eta(2c_*)
	\end{equation}
	from Jensen's inequality. Applying the Sobolev embedding, we get
	\[
		\int_{B_r}\eta(h -u) \leq |B_r| \eta( C |B_r|^{\frac{2 - n}{2n}} (\int |\n (u - h)|^2)^{\frac{1}{2}} ).
	\]
	Rewriting and plugging into \eqref{eq:harmonicapprox1} gives	
	\[
	\frac{1}{|B_r|}\int_{B_r} |\n (u-h)|^2 \leq 2\eta( C r (\frac{1}{|B_r|} \int_{B_r} |\n (u -h)|^2)^{1/2}).
	\]

	Setting $A = \frac{1}{|B_r|}\int_{B_r} |\n (u-h)|^2$, we have shown that
	\[
	A \leq 2 \eta( C r \sqrt{A}).
	\]
	We also have $A \leq 2\eta(2c_*) < \frac{1}{C}$ by inserting \eqref{eq:harmonicapprox2} into \eqref{eq:harmonicapprox1} directly and taking $c_*$ small enough.

	Now, if $A \geq r^2/C^2$, then
	\[
		\frac{\eta(C r \sqrt{A})}{C r \sqrt{A}} \leq \frac{\eta(r^2)}{r^2},
	\]
	giving $ A \leq 2 C r \sqrt{A} \frac{\eta(r^2)}{r^2}$, and so $ A \leq 4 C^2 \frac{\eta^2(r^2)}{r^2}$. This means that
	\[
	A \leq \max\{4 C^2 \frac{\eta^2(r^2)}{r^2}, r^2/C^2\} \leq C' \frac{\eta^2(r^2)}{r^2}.
	\]
\end{proof}

\begin{lemma}\label{lem:harmonic} Let $h$ be a harmonic function with finite Dirichlet energy on $B_r$ and with $h(0) = 0$, and let $s<r$. Then
	\[
	\int_{B_s}|h|^2 \leq \1 \frac{s}{r}\2^{n+2} \int_{B_r}|h|^2.
	\]
\end{lemma}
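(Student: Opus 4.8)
The plan is to use the expansion of $h$ into homogeneous harmonic polynomials (equivalently, spherical harmonics). Since $h$ is harmonic on $B_r$, we may write $h = \sum_{k=0}^\8 P_k$, converging locally uniformly on $B_r$, where each $P_k$ is a homogeneous harmonic polynomial of degree $k$; the normalization $h(0)=0$ forces the constant term $P_0$ to vanish, so the sum effectively runs over $k\geq 1$. On each sphere $\p B_\rho$ with $\rho<r$ the polynomials $P_j$ and $P_k$ of different degrees are $L^2(\p B_\rho)$-orthogonal, and $\int_{\p B_\rho}P_k^2\,d\s = \rho^{2k+n-1}a_k$, where $a_k:=\int_{\p B_1}P_k^2\,d\s \geq 0$.

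First I would record the slicing identity. Writing the integral in polar coordinates and using the orthogonality on each sphere,
\[
\int_{B_\rho}|h|^2 = \int_0^\rho\Big(\sum_{k\geq 1} t^{2k+n-1}a_k\Big)\,dt = \sum_{k\geq 1}\frac{a_k}{2k+n}\,\rho^{2k+n},
\]
valid for every $\rho<r$; interchanging the sum and integral is legitimate by monotone convergence since all terms are nonnegative. Letting $\rho\uparrow r$, the same identity holds with $\rho=r$ (with both sides allowed to be $+\8$, in which case the asserted inequality is trivial). Hence we may assume $\int_{B_r}|h|^2<\8$, so that $\sum_{k\geq 1}\frac{a_k}{2k+n}r^{2k+n}<\8$.

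Then I would compare the two series term by term. For $0<s<r$ and each $k\geq 1$ one has $s^{2k+n}=(s/r)^{2k+n}r^{2k+n}\leq (s/r)^{n+2}r^{2k+n}$, because $s/r\in(0,1)$ and $2k+n\geq n+2$. Summing over $k$ then gives
\[
\int_{B_s}|h|^2 = \sum_{k\geq 1}\frac{a_k}{2k+n}\,s^{2k+n} \leq \left(\frac{s}{r}\right)^{n+2}\sum_{k\geq 1}\frac{a_k}{2k+n}\,r^{2k+n} = \left(\frac{s}{r}\right)^{n+2}\int_{B_r}|h|^2,
\]
which is exactly the claimed estimate.

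There is no real obstacle here; the only points deserving a word of care are the existence and $L^2$-orthogonality on spheres of the homogeneous harmonic polynomial expansion (standard for harmonic functions on a ball) and the convergence bookkeeping near $\p B_r$, which is handled by monotone convergence as above. The geometric content is simply that a harmonic function vanishing at the origin has ``frequency at least one,'' so its $L^2$ mass on $B_\rho$ grows at least like $\rho^{n+2}$ as $\rho$ increases.
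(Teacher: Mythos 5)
Your proof is correct and follows essentially the same route as the paper: expand $h$ into homogeneous harmonic polynomials (all of degree at least $1$ since $h(0)=0$), use orthogonality to get $\int_{B_t}|h|^2=\sum_k \frac{a_k}{2k+n}t^{2k+n}$, and exploit $2k+n\geq n+2$; the paper phrases the last step as monotonicity of $t^{-(n+2)}\int_{B_t}|h|^2$ while you compare the series term by term, which is the same content. Your extra care with monotone convergence near $\p B_r$ is fine but not needed, since finite Dirichlet energy plus $h(0)=0$ already gives $h\in L^2(B_r)$.
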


\begin{proof}
	Write $h = \sum_{i = 1}^\8 \a_i P_i$, where each $P_i$ is a harmonic polynomial and the $P_i$ are orthonormal in $L^2(B_1)$. By the assumptions made, each $P_k$ is of degree at least $1$. Hence
	\[
	\int_{B_t}|h|^2 = \sum_{i = 1}^\8 \a_i^2 t^{n + 2 \deg P_k};
	\]
	after dividing by $t^{n+2}$ we see that the right hand side is a nondecreasing function. This gives the conclusion.
\end{proof}	

\begin{lemma} \label{lem:log2decay}
	Let $u$ be a minimizer of $G$ on $B_r$, $r\leq 1$, with $G$ satisfying \eqref{eq:loglip}. Assume that $\osc_{B_r} u\leq c_*$. Then there is a constant $C = C(n, \eta) > 0$ such that
	\[
	\frac{1}{\r^{n-2} \eta^2(\r^2)}\int_{B_\r}|\n u - \fint_{B_\r}\n u|^2 \leq C\1 1 + |\log \r/r|^2 + \frac{1}{r^{n-2} \eta^2(r^2)}\int_{B_r}|\n u - \fint_{B_r}\n u|^2\2
	\]
	for any $\rho < r$.
\end{lemma}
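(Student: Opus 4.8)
The plan is to iterate the harmonic approximation estimate of Lemma~\ref{lem:harmonicapprox} dyadically, following the standard Campanato-type scheme, but keeping careful track of how the error $\eta^2(t^2)/t^2$ accumulates as the radius shrinks. For a dyadic scale $\rho_j = 2^{-j} r$, let $h_j$ be the harmonic replacement of $u$ on $B_{\rho_j}$, and consider the normalized quantity
\[
\Phi(\rho) = \frac{1}{\rho^{n-2}\eta^2(\rho^2)}\int_{B_\rho}\Big|\n u - \fint_{B_\rho}\n u\Big|^2 = \frac{1}{\eta^2(\rho^2)}\fint_{B_\rho}\Big|\n u - \fint_{B_\rho}\n u\Big|^2.
\]
First I would compare $\n u$ to $\n h_j$ using Lemma~\ref{lem:harmonicapprox}, which gives $\fint_{B_{\rho_j}}|\n(u-h_j)|^2 \le C\eta^2(\rho_j^2)/\rho_j^2$; note the oscillation hypothesis is inherited on every smaller ball from the maximum principle, so the lemma applies at each scale. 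Then I would use Lemma~\ref{lem:harmonic} applied to $\n h_j - \fint_{B_{\rho_j}}\n h_j$ (which is harmonic, vector-valued, and whose components one can arrange to vanish at $0$ after subtracting the mean — or more simply use the Caccioppoli/mean-value decay for the oscillation of a harmonic function's gradient) to get decay of $\fint_{B_{\rho_{j+1}}}|\n h_j - \fint_{B_{\rho_j}}\n h_j|^2$ by a factor like $(\rho_{j+1}/\rho_j)^2 = 1/4$ relative to the scale-$\rho_j$ oscillation of $\n u$.

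Combining these two ingredients at consecutive scales yields a recursion of the schematic form
\[
\fint_{B_{\rho_{j+1}}}\Big|\n u - \fint_{B_{\rho_{j+1}}}\n u\Big|^2 \le \frac14 \,\fint_{B_{\rho_j}}\Big|\n u - \fint_{B_{\rho_j}}\n u\Big|^2 + C\,\frac{\eta^2(\rho_j^2)}{\rho_j^2}\cdot\rho_j^2 = \frac14\,\omega_j + C\,\eta^2(\rho_j^2),
\]
writing $\omega_j = \fint_{B_{\rho_j}}|\n u - \fint_{B_{\rho_j}}\n u|^2$. Dividing by $\eta^2(\rho_{j+1}^2)$ and using that $\eta(t)/t$ is nonincreasing (property (3)) — which makes $\eta(\rho_{j+1}^2) = \eta(\rho_j^2/4)$ comparable to $\eta(\rho_j^2)$ up to an absolute constant, since $\eta(\rho_j^2/4)\ge \tfrac14\eta(\rho_j^2)$ — one converts this into
\[
\Phi(\rho_{j+1}) \le \gamma\,\Phi(\rho_j) + C
\]
for some $\gamma < 1$ (here is where one needs the geometric decay factor $1/4$ to beat the constant lost from comparing $\eta$ at adjacent scales: as long as that comparison constant is bounded, $\gamma$ can be taken strictly below $1$). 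Iterating this linear recursion from scale $r$ down to scale $\rho_j$ gives $\Phi(\rho_j) \le \gamma^j \Phi(r) + C\sum_{i<j}\gamma^i \le \Phi(r) + C'$, which is even stronger than the claimed bound. For a general (non-dyadic) $\rho < r$, pick $j$ with $\rho_{j+1}\le \rho < \rho_j$ and pass from $\rho_j$ to $\rho$ losing only bounded constants, again via $\eta(t)/t$ monotonicity and the trivial monotonicity of $\int_{B_\rho}|\cdot|^2$ in $\rho$; the number of dyadic steps is $j \sim \log(r/\rho)$, which is where the $|\log \rho/r|^2$ term on the right-hand side would appear if one instead tracked a weaker recursion $\Phi(\rho_{j+1}) \le \Phi(\rho_j) + C$ (the safe fallback if the strict contraction $\gamma<1$ turns out to require more care with constants than expected).

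The main obstacle I anticipate is the bookkeeping in replacing $\eta(\rho_{j+1}^2)$ by $\eta(\rho_j^2)$ in the denominator and making sure the resulting multiplicative constant, combined with the $1/4$ from harmonic decay, still produces a contraction (or at worst an additive $+C$ per step, which already suffices for the stated estimate with the $|\log\rho/r|^2$ slack). A secondary technical point is handling the vector-valued harmonic function $\n h_j$ in Lemma~\ref{lem:harmonic}: that lemma is stated for scalar $h$ with $h(0)=0$, so I would either apply it componentwise to $\partial_k h_j - \fint_{B_{\rho_j}}\partial_k h_j$ (each of which is harmonic but need not vanish at $0$ — so really one wants the standard interior gradient estimate $\sup_{B_{\rho/2}}|\n^2 h| \le C\rho^{-1}(\fint_{B_\rho}|\n h - \text{const}|^2)^{1/2}$ giving oscillation decay), or reprove the one-line decay estimate directly for oscillations of harmonic gradients. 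Neither is deep, but both need to be spelled out cleanly so the constant $C(n,\eta)$ is transparently independent of $r$ and $\rho$.
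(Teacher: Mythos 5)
Your overall strategy (harmonic replacement via Lemma~\ref{lem:harmonicapprox} plus decay for harmonic functions via Lemma~\ref{lem:harmonic}, iterated Campanato-style) is the same as the paper's, but the execution as written has a genuine gap, and it is precisely at the step you flagged as the ``main obstacle.'' This lemma sits at the critical (Lipschitz-up-to-log) exponent: with the correct normalization $\Phi(\rho)=\frac{1}{\rho^{n-2}\eta^2(\rho^2)}\int_{B_\rho}|\nabla u-\fint_{B_\rho}\nabla u|^2$ (note your identity $\Phi(\rho)=\frac{1}{\eta^2(\rho^2)}\fint_{B_\rho}|\cdot|^2$ is off by a factor $c_n\rho^2$, and the per-step error from Lemma~\ref{lem:harmonicapprox} is $C\eta^2(\rho_j^2)/\rho_j^2$ at the level of averages, not $C\eta^2(\rho_j^2)$), the exact coefficient carried from scale $\rho_j$ to $\rho_{j+1}$ by the harmonic decay is $\bigl(\tfrac{\rho_{j+1}^2\,\eta(\rho_j^2)}{\rho_j^2\,\eta(\rho_{j+1}^2)}\bigr)^2$, which by property (3) is $\le 1$ but can equal $1$ (e.g.\ when $\eta$ is essentially linear). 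So there is no strict contraction $\gamma<1$: your accounting ``$\tfrac14$ beats the bounded comparison constant'' fails quantitatively, since $\eta(\rho_{j+1}^2)\ge\tfrac14\eta(\rho_j^2)$ only bounds the comparison factor by $16$ for squared quantities, and $\tfrac14\cdot 16=4>1$. Worse, even your fallback recursion $\Phi(\rho_{j+1})\le\Phi(\rho_j)+C$ does not follow from the bookkeeping you propose: working with squared $L^2$ quantities and $|a+b|^2\le 2|a|^2+2|b|^2$ (or $(1+\delta)$-versions with fixed $\delta$) multiplies the main term by a constant strictly larger than $1$, and since the sharp coefficient is already at the threshold $1$, this loss compounds over the $\sim|\log(\rho/r)|$ dyadic steps into a polynomial-in-$(r/\rho)$ blow-up rather than an additive logarithm.

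The paper avoids this by never squaring prematurely: it runs the iteration at the level of $L^2$ norms, setting $\phi(t)=\bigl(\frac{1}{t^{n-2}\eta^2(t^2)}\int_{B_t}|\nabla u-\fint_{B_t}\nabla u|^2\bigr)^{1/2}$ and using Minkowski's inequality to combine Lemma~\ref{lem:harmonic} (applied to the components of $\nabla h-\nabla h(0)$; your worry about vanishing at the origin is moot, since by the mean value property $\fint_{B_s}\nabla h=\nabla h(0)$) with Lemma~\ref{lem:harmonicapprox}. This yields, for $t<s\le r$,
\begin{equation*}
\phi(t)\le \frac{t^2\eta(s^2)}{s^2\eta(t^2)}\,\phi(s)+C\,\frac{s^{n/2+1}}{t^{n/2+1}}\le \phi(s)+C\,\frac{s^{n/2+1}}{t^{n/2+1}},
\end{equation*}
i.e.\ coefficient exactly $\le 1$ with no multiplicative degradation and a bounded additive error per fixed-ratio step; iterating with $t=\tau s$, $\tau=1/2$, gives $\phi(\rho)\le\phi(r)+C(1+|\log\rho/r|)$, and squaring produces the stated estimate with the $|\log\rho/r|^2$ term. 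To repair your proof you should either switch to this norm-level (Minkowski) iteration or otherwise ensure that no constant multiplies the main term at each scale; as written, the contraction claim is false and the additive fallback is not established.
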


\begin{proof}
	First, fix $\r_1 < \r_2 \leq r$ and let $h$ be the harmonic function on $B_{\r_2}$ which coincides with $u$ on $\p B_{\r_2}$. Then, from Lemma \ref{lem:harmonic} applied to the components of $\n h - \n h(0)$ (noting that $\fint_{B_{s}}\n h = \n h(0)$), we have
	\[
	\int_{B_{\r_1}}|\n h - \fint_{B_{\r_1}}\n h|^2 \leq \1\frac{\r_1}{\r_2}\2^{n+2} \int_{B_{\r_2}}|\n h - \fint_{B_{\r_2}}\n h|^2.
	\]
	Applying Lemma \ref{lem:harmonicapprox} to $u$ and $h$ on $B_{\r_2}$, this gives
	\[
	\1\int_{B_{\r_1}}|\n u - \fint_{B_{\r_1}}\n u|^2\2^{1/2} \leq \1\1\frac{\r_1}{\r_2}\2^{n+2} \int_{B_{\r_2}}|\n u - \fint_{B_{\r_2}}\n u|^2\2^{1/2} + C \r_2^{n/2-1} \eta(\r_2^2).
	\]

	Setting $\phi(t) = \1\frac{1}{t^{n-2} \eta^2(t^2)} \int_{B_{t}}|\n u - \fint_{B_{t}}\n u|^2\2^{1/2}$, we have shown that for any $t < s \leq r$,
	\[
	\phi(t) \leq \1\frac{t^2\eta(s^2)}{s^2\eta(t^2)}\2\phi(s) + C \frac{s^{n/2-1}\eta(s^2)}{t^{n/2-1}\eta(t^2)} \leq \phi(s) + C \frac{s^{n/2+1}}{t^{n/2+1}}.
	\]	
	Setting $t = \t s$ in the above gives
	\[
	\phi(\t s) \leq \phi(s) + C \t^{-n/2-1}.
	\]
	Iterating,
	\begin{align*}
	\phi(\t^{k+1} s) &\leq \phi(\t^{k}s) + C \t^{-n/2-1}\\
	&\leq \phi(s) + C (k+1) \t^{-n/2-1}\\
	&\leq \phi(s) + C \frac{|\log(t/s)|}{|\log \t|} \t^{-n/2-1}
	\end{align*}
	where $t = \t^{k + 1}s$. Now fix $\t < 1$ (for example, $\t = 1/2$), set $s = r$, and choose $k$ such that $\r \in [\t^{k+1}s, \t^k s]$, to obtain
	\[
	\phi(\r) \leq \phi(\t^k r) + C \t^{-n/2-1} \leq \phi(r) + C(\t)(1 + |\log \r/r| ).
	\]
\end{proof}

Below set
\[
\eta_1^2(t) = \int_0^t \frac{\eta^2(s^2)}{s^2}(1 + |\log s|^2) ds.
\]
From assumption (4) on $\eta$, this is a finite increasing function of $t$. We will use that
\[
\sum_{k = K}^\infty \frac{\eta^2(2^{-2k}r_0^2)}{2^{-2k}r_0^2} (1 + k^2) \approx \eta_1^2(2^{-K}r_0)
\]
and the doubling property
\[
\eta_1(2r) \leq C\eta_1(r)
\]
below. For $\eta(t) = t(1 + |\log t|^J)$, one may compute $\eta_1(t) \approx t (1 + |\log t|^{J + 1})$.

\begin{lemma}\label{lem:log2reg}
	Let $u$ be a minimizer of $G$ on $B_1$, with $G$ satisfying \eqref{eq:loglip}. Then for any $x,y\in B_{1/4}$, we have
	\[
	|\n u(x) - \n u(y)| \leq C(n, \eta, \|u\|_{L^2(B_1)}) \eta_1(|x - y|).
	\]
\end{lemma}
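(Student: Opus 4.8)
The plan is to upgrade the Campanato-type decay estimate of Lemma \ref{lem:log2decay} into a pointwise modulus of continuity for $\n u$ via a standard telescoping/summation argument, exactly as one passes from a Morrey--Campanato seminorm bound to Hölder continuity, but tracking the extra logarithmic factors carefully. First I would fix $x, y \in B_{1/4}$ and set $d = |x-y|$; by covering $B_{1/4}$ with finitely many balls (depending only on $n$) and using Proposition \ref{prop:C1a} to bound $\n u$ on the larger ball, it suffices to work on a ball $B_{r_0}(z) \subset B_{1/2}$ with $r_0$ a fixed fraction of $1/4$ and with $x, y \in B_{r_0/2}(z)$; shrinking $r_0$ further if necessary I can arrange $\osc_{B_{r_0}(z)} u \leq c_*$ using the $C^{0,\a}$ (or $C^{1,\a}$) estimate, so that Lemma \ref{lem:log2decay} applies on balls centered at points near $z$.

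The core estimate: for a point $p$ and radii $\r < r \leq r_0$, Lemma \ref{lem:log2decay} gives
\[
\fint_{B_\r(p)} \Big| \n u - \fint_{B_\r(p)} \n u \Big|^2 \leq C \r^2 \frac{\eta^2(\r^2)}{\r^2} \Big( 1 + |\log(\r/r)|^2 + \frac{1}{r^{n-2}\eta^2(r^2)} \int_{B_r(p)} \Big| \n u - \fint_{B_r(p)} \n u \Big|^2 \Big),
\]
so taking $r = r_0$ (on which the last term is controlled by $\|\n u\|_{L^\infty(B_{1/2})}^2 r_0^n$, hence by a constant depending on the allowed data) and writing $\r = 2^{-k} r_0$, one gets
\[
\fint_{B_{2^{-k}r_0}(p)} \Big| \n u - \fint_{B_{2^{-k}r_0}(p)} \n u \Big|^2 \leq C \frac{\eta^2(2^{-2k} r_0^2)}{2^{-2k} r_0^2}\,\big(1 + k^2\big).
\]
Then, in the usual way, the difference of the averages of $\n u$ over consecutive dyadic balls $B_{2^{-k-1}r_0}(p) \subset B_{2^{-k}r_0}(p)$ is bounded (by Cauchy--Schwarz and the doubling of volumes) by a constant times $\big(\fint_{B_{2^{-k}r_0}(p)}|\n u - \fint \n u|^2\big)^{1/2} \leq C \frac{\eta(2^{-2k}r_0^2)}{2^{-k}r_0}(1 + k)$. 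Summing this over $k \geq K$ and using the displayed asymptotic
\[
\sum_{k=K}^\infty \frac{\eta(2^{-2k}r_0^2)}{2^{-k} r_0}(1+k) \lesssim \sum_{k=K}^\infty \Big(\frac{\eta^2(2^{-2k}r_0^2)}{2^{-2k}r_0^2}(1 + k^2)\Big)^{1/2}
\]
— which by Cauchy--Schwarz in the sum, or more cleanly by comparing term by term to a convergent geometric-type tail, is $\lesssim \eta_1(2^{-K}r_0)$ in view of the relation $\sum_{k\geq K}\frac{\eta^2(2^{-2k}r_0^2)}{2^{-2k}r_0^2}(1+k^2) \approx \eta_1^2(2^{-K}r_0)$ stated just before the lemma — shows that $\fint_{B_{2^{-k}r_0}(p)}\n u$ is Cauchy as $k \to \infty$, with limit $\n u(p)$ (by continuity of $\n u$ and the Lebesgue differentiation theorem), and
\[
\Big| \n u(p) - \fint_{B_{2^{-K}r_0}(p)} \n u \Big| \leq C\, \eta_1(2^{-K}r_0).
\]

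Finally I would combine these at the two points $x$ and $y$: choose $K$ so that $d \leq 2^{-K} r_0 \leq 2d$ (possible if $d \leq r_0/2$, which we may assume, the case $d \geq r_0/2$ being handled by the $C^{1,\a}$ bound and $\eta_1(d) \geq cd$). Then $B_{2^{-K}r_0}(x)$ and $B_{2^{-K}r_0}(y)$ are both contained in a ball of radius $\leq 4 \cdot 2^{-K}r_0$ around, say, $x$, and comparing averages over nested balls once more (using $\fint_{B_\r(p)}|\n u - \fint|^2 \leq C\frac{\eta^2(\r^2)}{\r^2}(1+|\log \r/r_0|^2)$, the estimate above applied with $\r \sim 2^{-K}r_0$) bounds $\big|\fint_{B_{2^{-K}r_0}(x)}\n u - \fint_{B_{2^{-K}r_0}(y)}\n u\big|$ by $C\,\eta_1(2^{-K}r_0)$. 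Altogether, by the triangle inequality and the doubling property $\eta_1(2r) \leq C\eta_1(r)$,
\[
|\n u(x) - \n u(y)| \leq C\,\eta_1(2^{-K}r_0) \leq C\,\eta_1(d),
\]
which is the claim. The main obstacle is bookkeeping the logarithmic weights: one must verify that the summation of the $(1+k)$-weighted increments really produces $\eta_1$ and not a larger function, and that the abstract properties (1)--(4) of $\eta$ together with the stated asymptotics and doubling of $\eta_1$ suffice to close every estimate uniformly; everything else is the textbook Campanato iteration.
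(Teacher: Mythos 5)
Your proposal is correct and follows essentially the same route as the paper's proof: a uniform gradient bound from Propositions \ref{prop:degiorgi} and \ref{prop:C1a} to fix a scale $r_0$ with small oscillation, Lemma \ref{lem:log2decay} applied with $r=r_0$, a dyadic telescoping of the averages $\fint_{B_{2^{-k}r_0}}\n u$ summed against $\eta_1$, and a final comparison of the two points at scale $|x-y|$ using the doubling of $\eta_1$. The only cosmetic difference is that you make explicit the $\ell^1$-versus-$\ell^2$ bookkeeping in the summation step, which the paper leaves implicit.
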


The $\eta$ dependence here is both in the form of $\eta_1$ (which depends on $\eta$ explicitly) and in the constant (which depends on the constant in Lemma \ref{lem:log2decay}).

\begin{proof}
	First, from applying Propositions \ref{prop:degiorgi} and \ref{prop:C1a}, we have that for $x\in B_{1/2}$,
	\begin{equation}\label{eq:log2reglip}
	|\n u(x)| \leq C.
	\end{equation}
	In particular, there is a fixed $r_0>0$ such that $\osc_{B_r(x)} u \leq c_*$ for every $x\in B_{1/4}$ and $r\leq r_0$. Applying Lemma \ref{lem:log2decay} with $r = r_0$, we see that for any $\r < r_0<\frac{1}{4}$,
	\[
	\fint_{B_\r(x)}|\n u - \fint_{B_{\r}(x)} \n u|^2 \leq C(r_0, \|u\|_{H^1(B_1)}) \frac{\eta^2(\r^2)}{\r^2} (1 + |\log \r|^2).
	\]
	This gives
	\begin{align*}
	|\fint_{B_{2^{-k-1}r_0}(x)}\n u - \fint_{B_{2^{-k}r_0}(x)}\n u|^2 & \leq \fint_{B_{2^{-k-1}r_0}(x)} |\n u - \fint_{B_{2^{-k}r_0}(x)}\n u|^2\\
	& \leq 2^n\fint_{B_{2^{-k}r_0}(x)} |\n u - \fint_{B_{2^{-k}r_0}(x)}\n u|^2\\
	& \leq C \frac{\eta^2(2^{-2k}r_0^2)}{2^{-2k}r_0^2} (1 + k^2)
	\end{align*}
	for $k \geq 0$. Summing, we have that the averages $\fint_{B_{2^{-k}r_0(x)}} \n u$ form a Cauchy series and
	\[
	\left|\n u(x) - \fint_{B_{2^{-k}r_0}(x)} \n u\right|^2 \leq C \eta_1^2(2^{-k}r_0).
	\]
	
	Now take any $x,y\in B_{1/4}$. If $|x - y|\geq r_0/4$, then \eqref{eq:log2reglip} directly implies the conclusion. If not, let $k$ be such that $2^{-k-1}r_0 < |x-y| < 2^{-k}r_0$; we then have
	\begin{align*}
	|\n u(x) - \n u(y)|^2 &\leq C \left|\fint_{B_{2^{1-k}r_0}(x)}\n u - \fint_{B_{2^{-k}r_0}(y)}\n u\right|^2 + C \eta_1^2(2^{-k}r_0)\\
	& \leq C\fint_{B_{2^{-k}r_0}(y)}\left|\n u - \fint_{B_{2^{1-k}r_0}(x)} \n u\right|^2 + C \eta_1^2(2^{-k}r_0)\\
	& \leq C\fint_{B_{2^{1-k}r_0}(x)}\left|\n u - \fint_{B_{2^{1-k}r_0}(x)} \n u\right|^2 + C \eta_1^2(2^{-k}r_0)\\
	&\leq C \eta_1^2(2^{-k}r_0)\\
	&\leq C \eta_1^2(|x - y|).
	\end{align*}
	This gives the conclusion.
\end{proof}

\section{Optimal Growth via the Weiss Formula} \label{sec:growth}

In this section we establish growth and monotonicity results for $u$ near points where $|u(0)|, |\n u(0)|$ are small. The results here are already interesting if $u(0) = |\n u(0)| = 0$ (i.e. at one-phase and branch points), though the greater generality will be helpful in the next section.

\begin{remark} \label{rem:modulus}
	Let $u$ be an $E_r$ minimizer on $B_1$ with $|u(0)| \leq \b$ and $ |\n u(0)| \leq \e$. Then applying Lemma \ref{lem:log2reg}, we see that $u$ admits the suboptimal modulus $\w(t) = Ct^2 (1 + |\log t|)^2$ for $t |\log t| \geq \e$ and $t^2 |\log t| \geq \b$. In other words, $\sup_{B_r} |u| \leq \w(r)$ as long as $r |\log r| \geq \e$, $r^2 |\log r| \geq \b$, and $r \leq \frac{1}{2}$, and $\w$ satisfies
	\[
	\int_0^1 \frac{\log(\w(r)) - 2 \log r}{r (1- 2\log r)^2} < \8.
	\]
	If $\e, \b = 0$, i.e. $u(0) = |\n u(0)| = 0$, then this is valid for all $r$. This integrability property is the only aspect of $\w$ which will be relevant below; note that it would also remain valid for $\w(t) = t (1 + |\log t|)^p$ for any $p$, though not for $\w(t) = t^{\a}$ with $\a < 2$ (hence the importance of the preceding section). Note that $\w$ here depends on $n$ and $\int_{B_1}u^2$ only.
\end{remark}

\begin{remark} \label{rem:Fcontrol}
	So long as $\sup_{B_r}|u| \leq \w(r)$ and $r \leq r_0(n, \w)$, we have that $F_r(u_r) \geq c(n) |u_r| \geq 0$ on $B_1$. Indeed,
	\begin{align*}
	F_r(u_r) &\geq c |u_r|(1 - \frac{\log |u_r|}{1 - 2 \log r})\\
	& \geq c |u_r|(1 - \frac{\log (\w(r)/\mu(r))}{1 - 2 \log r})\\
	& \geq c |u_r|(1 - \frac{\log (C(1 - \log r))}{1 - 2 \log r})\\
	& \geq c |u_r|.
	\end{align*}
\end{remark}

Let
\[
W(r) = \a(r)\int_{B_1} |\n u_r|^2 + 2 F_r(u_r) -2 \int_{\p B_1} u_r^2
\]
be a renormalized Weiss-type energy centered about the origin, where
\[
\a(r) = 1 - \frac{1}{2 \log r} \geq 1
\]
is an increasing function. Set
\[
\m(r) = r^2 (1 - 2\log r).
\]

Let us compute the derivative, in $r$, of this quantity $W$. First,
\[
\p_r u_r(x) = \p_r \frac{u(r x)}{\m(r)} = \frac{\n u_r(x) \cdot x}{r} - u_r(x) \frac{\m'(r)}{\m(r)}.
\]
The last factor can be written as
\[
\frac{\m'(r)}{\m(r)} = \frac{2}{r} (1 - \frac{1}{1 - 2 \log r}).
\]
We also have
\[
\p_r |\n u_r|^2 = 2 \n u_r \n (\p_r u_r),
\]
and
\[
\p_r F_r(u_r) = (\p_r F_r)(u_r) + f_r(u_r) (\p_r u_r), 
\]
where $f_r(t) = \p_t F_r(t) = \Delta u_r$. Combining and using the divergence theorem,
\begin{align*}
W'(r) &= \a(r)\int_{B_1} 2 \n u_r \n(\p_r u_r) + 2 (\Delta u_r) (\p_r u_r) + 2(\p_r F_r)(u_r) \\
&\qquad - 4 \int_{\p B_1}  u_r \p_r u_r + \a'(r)\int_{B_r} |\n u_r|^2 + 2 F_r(u_r) \\
& \geq 2\a(r) \int_{B_1} (\p_r F_r)(u_r) + 2 \int_{\p B_1} (\a(r)\n u_r \cdot x - 2 u_r) \p_r u_r \\
& \geq 2\a(r) \int_{B_1} (\p_r F_r)(u_r) + \frac{2 \a(r)}{r} \int_{\p B_1} (\n u_r \cdot x - \frac{2}{\a(r)} u_r)^2 \\
& := \frac{2 \a(r)}{r} \int_{\p B_1} (\n u_r \cdot x -  \frac{2}{\a(r)} u_r)^2 + Q(r).
\end{align*}
The first step used that $\a'$ is nonnegative, as is the integral that it is multiplied by, while the second step used that $\a(r) = (1 - \frac{1}{1 - 2 \log r})^{-1}$ and the computation of $\p_r u_r$.

A central point in our further discussions will be control over the error term $Q$. Let us expand it out:
\[
(\p_r F_r)(t) = (\l_+t_+ + \l_- t_-) \frac{2 r^3}{\mu^2(r)} (-\log |t| +  1 - \log(1 - 2 \log r)) .
\]
The key observation here is that since we are only concerned about bounding this from below, the single problematic situation in the above is when $u_r$ is large and hence $- |u_r| \log |u_r|$ is very negative. This motivates the following computation: (assuming $r < r_0$ below, so that $\a(r) \leq C$)
\begin{align*}
Q(r) &= 2\a(r) \int_{B_1} (\p_r F_r)(u_r)\\
&\geq - C \frac{r^3}{\mu^2(r)} \int_{B_1} |u_r|( (\log |u_r|)_+  + \log(1 - 2 \log r))\\
&\geq - C \frac{r^3}{\mu^2(r)} \int_{B_1} |u_r|( \log (\w(r)/\mu(r))  + \log(1 - 2 \log r))\\
& \geq - C \nu(r) \int_{B_1} |u_r| \\
& \geq - C \nu(r) \int_{B_1}F_r(u_r),
\end{align*}
where $\nu(r) := \frac{1 - \log(1 - 2 \log r) - \log(\w(r)/\mu(r)) }{r (1 - 2\log r)^2}$ is an integrable function on $[0,1]$. We used here that $|u_r| \leq \w(r)/\mu(t)$; the final inequality comes from Remark \ref{rem:Fcontrol}. To summarize, we have shown
\begin{equation} \label{eq:Qest}
Q(r) \geq - C \nu(r) \int_{B_1}F_r(u_r) \qquad r < r_0,
\end{equation}
with $\nu$ an integrable function.

For any $H^1$ function $u$, let $P u$ denote the quadratic harmonic polynomial on $B_1$ minimizing
\[
\int_{\p B_1} |u - P u|^2.
\]
One may check that
\begin{equation}\label{eq:subpoly}
\int_{B_1} |\n (u - Pu)|^2 - 2 \int_{\p B_1} |u - Pu|^2 = \int_{B_1} |\n u|^2 - 2 \int_{\p B_1} u^2
\end{equation}
by integrating by parts and using that $Pu$ is homogeneous of degree 2.

\begin{lemma}\label{lem:BMOest} Let $u$ be an $E_r$ minimizer on $B_1$ with $|u(0)| \leq 1$ and $|\n u(0)|\leq 1$. Then
	\[
	\int_{\p B_1} |u - P u|^2 \leq C [W_0(u; r) + 1].
	\]
\end{lemma}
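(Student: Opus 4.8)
The plan is to separate $u|_{\partial B_1}$ into a high-frequency part, which the Weiss energy $W_0(u;r)$ controls directly, and a low-frequency part — essentially the spherical mean and the first moment of $u$ on $\partial B_1$ — which is controlled by the pointwise hypotheses $|u(0)|,|\nabla u(0)|\le 1$ together with the equation $\Delta u=f_r(u)1_{\{u\neq 0\}}$.

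First I would expand $u|_{\partial B_1}=\sum_{k\ge 0}Y_k$ in spherical harmonics, so that $Pu$ is the quadratic harmonic polynomial with trace $Y_2$ and $\int_{\partial B_1}|u-Pu|^2=\sum_{k\neq 2}\|Y_k\|_{L^2(\partial B_1)}^2$. Since the harmonic extension $h$ of $u|_{\partial B_1}$ minimizes Dirichlet energy, $\int_{B_1}|\nabla u|^2\ge\int_{B_1}|\nabla h|^2=\sum_{k\ge1}k\|Y_k\|^2\ge\|Y_1\|^2+2\|Y_2\|^2+3\sum_{k\ge3}\|Y_k\|^2$. On the other hand, since $|F_r(t)|\le C(1+t^2)$ and $u\in H^1(B_1)$, writing $W_0(u;r)=\int_{B_1}|\nabla u|^2+2\int_{B_1}F_r(u)-2\int_{\partial B_1}u^2$ gives $W_0(u;r)\ge\int_{B_1}|\nabla u|^2-2\sum_{k}\|Y_k\|^2-C$. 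Subtracting the two inequalities, the $2\|Y_2\|^2$ terms cancel and one obtains $\sum_{k\ge3}\|Y_k\|^2\le W_0(u;r)+2\|Y_0\|^2+2\|Y_1\|^2+C$, hence
\[
\int_{\partial B_1}|u-Pu|^2\le W_0(u;r)+3\|Y_0\|^2+3\|Y_1\|^2+C.
\]
The same inequality falls out of \eqref{eq:subpoly} applied to $u$, read frequency by frequency. So it remains to show $\|Y_0\|^2+\|Y_1\|^2\le C$.

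For this I would write $u=h+w$ with $h$ harmonic, $h|_{\partial B_1}=u|_{\partial B_1}$, $w|_{\partial B_1}=0$, so that $\Delta w=\Delta u=f_r(u)1_{\{u\neq0\}}$. Then $\|Y_0\|^2$ and $\|Y_1\|^2$ are, up to dimensional constants, $h(0)^2$ and $|\nabla h(0)|^2$, with $h(0)=u(0)-w(0)$ and $\nabla h(0)=\nabla u(0)-\nabla w(0)$, while, via the Green function $G_0(0,\cdot)$ of $B_1$,
\[
|w(0)|\le\int_{B_1}G_0(0,y)|\Delta u(y)|\,dy,\qquad |\nabla w(0)|\le\int_{B_1}|\nabla_xG_0(0,y)|\,|\Delta u(y)|\,dy,
\]
with $G_0(0,y)\le C|y|^{2-n}$ and $|\nabla_xG_0(0,y)|\le C|y|^{1-n}$ ($\log$-kernels if $n=2$). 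Using $|u(0)|,|\nabla u(0)|\le1$ and $|\Delta u|\le C(1+|\log|u||)1_{\{u\neq0\}}$, the whole estimate reduces to showing
\[
\int_{B_1}|y|^{1-n}\bigl(1+|\log|u(y)||\bigr)1_{\{u\neq0\}}\,dy\le C.
\]

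The ``$1$'' contributes only $\int_{B_1}|y|^{1-n}\,dy<\infty$, but the logarithmic term is the real content of the lemma and the step I expect to be the main obstacle: it demands integrability of the singular reaction term against the Green kernel \emph{without} already knowing non-degeneracy of $u$. Near $y=0$, if $u(0)\neq0$ then $|u|\ge|u(0)|/2$ on a fixed ball and $|\log|u||$ is bounded there; if $u(0)=0$ one must rule out $|u|$ being extremely small near the origin, for which I would use that where $0<u<t$ one has $\Delta u=f_r(u)\gtrsim|\log t|$ — comparing with the solution of $\Delta\phi=c|\log t|$ on a ball forces $u$ to exceed $t$ outside a ball of radius $\lesssim(t/|\log t|)^{1/2}$, and iterating against the $C^{1,\alpha}$ gradient bound forbids positive interior minima of $u$ below a fixed positive threshold. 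Away from $y=0$ the kernel is bounded, and writing the small-value contribution as $\int_0^1 t^{-1}\,|\{0<|u|<t\}|\,dt$ (the part $\{|u|\ge1\}$ being absorbed into $\|u\|_{L^2}$), one needs a decay rate for the sublevel sets $|\{0<|u|<t\}|$ that is uniform in $r$; here the nearly-optimal modulus of Remark~\ref{rem:modulus} / Lemma~\ref{lem:log2reg} near the free boundary, combined with the subharmonicity estimate above away from it, is exactly what makes the integral converge. Assembling these bounds gives $\|Y_0\|^2+\|Y_1\|^2\le C$ and hence the lemma, with $C$ depending on $n$, $\lambda_\pm$ and $\|u\|_{L^2(B_1)}$.
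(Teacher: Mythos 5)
Your reduction of the estimate to the low modes (the algebra with the spherical-harmonic expansion, $\int_{B_1}|\nabla u|^2\ge\sum_k k\|Y_k\|^2$, and the cancellation of the degree-$2$ part) is fine, but the proof then hinges entirely on the step you yourself flag as the main obstacle, and that step is a genuine gap, not a technicality. To bound $|w(0)|$ and $|\nabla w(0)|$ you need
\[
\int_{B_1}|y|^{1-n}\bigl(1+\bigl|\log|u(y)|\bigr|\bigr)1_{\{u\neq 0\}}\,dy\le C,
\]
i.e.\ an $L^1$-type bound on $\Delta u=f_r(u)$ against the Green kernel. This requires quantitative decay of $|\{0<|u|<t\}|$ as $t\to 0$ (note $\int_{B_1}|\log|u||\,1_{\{u\ne0\}}=\int_0^\infty|\{0<|u|<e^{-s}\}|\,ds$ up to the harmless large-$|u|$ part), and nothing proved up to this point of the paper rules out, say, $|u|\sim e^{-1/d}$ near a degenerate piece of $\{u=0\}$, for which the integral diverges. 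The tools you invoke do not close this: Lemma \ref{lem:log2reg} and Remark \ref{rem:modulus} give an upper modulus for $u$, which says nothing about how small $|u|$ can be on a set of positive measure; the barrier argument you sketch is essentially the nondegeneracy of Lemma \ref{lem:lb}, which gives $\sup_{B_r}|u^\pm|$ lower bounds but not sublevel-set measure estimates (in the two-phase setting $u$ changes sign, so "no small interior minima" does not control $\{0<|u|<t\}$ near branch points where $\nabla u$ also vanishes). Converting nondegeneracy into such measure estimates classically uses the optimal quadratic growth — exactly what this lemma is a step toward proving — so your route runs into the catch-22 the paper is designed to avoid. A secondary but real problem is the constant: absorbing $2\int_{B_1}F_r(u)$ and the sublevel sets makes your $C$ depend on $\|u\|_{L^2(B_1)}$, whereas the lemma is applied in Theorem \ref{thm:logreg} to the rescalings $u_r$ uniformly in $r$, for which no a priori $L^2$ bound is available; the constant must depend only on $n$ and $\lambda_\pm$.

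The paper's proof sidesteps all of this by compactness: assuming the inequality fails along a sequence, one sets $M_k=\int_{\partial B_1}|u_k-Pu_k|^2$ and $v_k=(u_k-Pu_k)/\sqrt{M_k}$, uses \eqref{eq:subpoly} to see that $\int_{B_1}|\nabla v_k|^2-2\int_{\partial B_1}v_k^2\to 0$, and passes to a limit $v$ which (by Lemma \ref{lem:conv}, since $F_{r_k}(\sqrt{M_k}t)/M_k\to 0$) is harmonic with $v(0)=0$, $\nabla v(0)=0$, and $\int_{\partial B_1}v^2=1$. Almgren frequency monotonicity then forces $v$ to be a quadratic harmonic polynomial, contradicting the orthogonality of $v_k$ to such polynomials on $\partial B_1$. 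The normalization by $\sqrt{M_k}$ kills the right-hand side of the equation, so no integrability of $\log|u|$ is ever needed; if you want a direct (non-compactness) proof along your lines, you would first have to establish the sublevel-set estimate independently, which is not currently available.
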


Here $W_0(u; r) = \int_{B_1} |\n u|^2 + 2 F_r(u) - 2 \int_{\p B_1} u^2$.

\begin{proof}
	We argue by contradiction. Assuming this is not the case, there is a sequence of numbers $r_k \rightarrow r_\8 \in [0,1]$ and $u_k$ being $E_{r_k}$ minimizers such that
	\[
	\int_{\p B_1} |u_k - P u_k|^2 = C_k [W_0(u_k) + 1] = M_k,
	\]
	with $C_k \rightarrow \8$. Note that $M_k \rightarrow \8$ as well. Set $v_k = \frac{u_k - P u_k}{\sqrt{M_k}}$; these functions have
	\begin{align*}
	\int_{B_1} |\n v_k|^2 - 2\int_{\p B_1} v_k^2 & = M_k^{-1}[\int_{B_1} |\n (u_k - P u_k)|^2 - 2\int_{\p B_1} (u_k - P u_k)^2]  \\
	& = M_k^{-1}[\int_{B_1} |\n u_k|^2 - 2\int_{\p B_1} u_k^2]\\
	& \leq M_k^{-1}W_0(u_k; r_k) = \frac{W_0(u_k; r_k)}{C_k(W_0(u_k; r_k) + 1)} \leq C_k^{-1} \rightarrow 0.
	\end{align*}
	This gives
	\[
	\int_{B_1} |\n v_k|^2 \leq 2 + C_k^{-1} \leq 3,
	\]
	so passing to a subsequence, the $v_k$ converge weakly in $H^{1}$ to a $v$ with
	\[
	\int_{B_1} |\n v|^2 \leq \liminf_{k} \int_{B_1} |\n v_k|^2 \leq 2.
	\] 
	We also have that $v_k$ converge strongly in $L^2(B_1)$ and $L^2(\p B_1)$, the latter giving
	\[
	\int_{\p B_1} v^2 = \lim_k \int_{\p B_1} v_k^2 = 1
	\]
	by the definitions of $M_k$ and $v_k$. From Propositions \ref{prop:degiorgi} and \ref{prop:C1a} applied to $v_k$, we have that $v_k$ converge locally on $B_1$ in $C^{1,\a}$ topology, so in particular $|v(0)| + |\n v(0)| \leq \lim M_k^{-1/2} = 0$. From Lemma \ref{lem:conv}, we have that as $F_{r_k}(\sqrt{M_k} t)/M_k \rightarrow 0$ locally uniformly (in $t$), $v$ is harmonic on $B_1$. 
	
	It follows from the monotonicity of Almgren's frequency that
	\[
	2 \leq \lim_{s \rightarrow 0} \frac{s \int_{B_s} |\n v|^2}{\int_{\p B_s} v^2} \leq \frac{\int_{B_1} |\n v|^2}{\int_{\p B_1} v^2} \leq 2,
	\]
	and this equality implies that $v$ is a quadratic harmonic polynomial. However, $v_k$ is orthogonal to quadratic harmonic polynomials, and this passes to the limit by the strong convergence of $v_k$ in $L^2(\p B_1)$:
	\[
	0 = \lim_k \int_{\p B_1} v_k P v_k = \int_{\p B_1} v P v = \int_{\p B_1} v^2 
	\] 
	giving $v = 0$; this contradicts that $\int_{\p B_1} v_k^2 = 1$. 
\end{proof}

\begin{theorem}\label{thm:logreg} Let $u$ be an $E$ minimizer on $B_1$. There exists a $\r_W \leq 1$ and a $C_W$, depending only on $n, \l_\pm,$ and $\int_{B_1}u^2$, such that for any $M>0$ and $\r < \r_0 \leq \r_W(n)$, if $|\n u(0)|\leq \frac{\r}{2} |\log \r|$, $|u(0)|\leq \frac{\r^2}{4} |\log \r|$,
	\[
	E_{\r_0}(u_{\r_0}; B_1) \leq M,
	\]
	and
	\[
	\sup_{r \in [\r,\r_0]} \int_{B_1}F_r(u_r) \leq C_W(1 + M),
	\]
	then
	\[
	\sup_{r \in [\r/2,\r_0]} \int_{B_1} |\n (u_r - P u_r)|^2 + \int_{B_1}F_r(u_r) \leq C_W(1 + M).
	\]
\end{theorem}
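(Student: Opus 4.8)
The strategy is to prove the bound by propagating it \emph{downward} from scale $\rho_0$ using an almost monotonicity of the Weiss energy $W$, combined with an algebraic comparison that makes $W(r)$ and the target quantity $\Phi(r):=\int_{B_1}|\n(u_r-Pu_r)|^2+\int_{B_1}F_r(u_r)$ interchangeable up to constants. The whole scheme hinges on the fact that, once the suboptimal modulus of Lemma~\ref{lem:log2reg} is known, the otherwise dangerous error term $Q$ in the formula for $W'$ becomes integrable in $r$; this is exactly the information packaged in Remark~\ref{rem:modulus} and \eqref{eq:Qest}.

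\emph{Step 1: the modulus and the Weiss inequality on $[\rho/2,\rho_0]$.} The pointwise hypotheses $|\n u(0)|\le\frac\rho2|\log\rho|$ and $|u(0)|\le\frac{\rho^2}4|\log\rho|$ are tailored so that, since $r\mapsto r|\log r|$ and $r\mapsto r^2|\log r|$ are increasing for small $r$, the thresholds in Remark~\ref{rem:modulus} are met for every $r\in[\rho/2,\rho_0]$; this is precisely why the conclusion is allowed to range over $[\rho/2,\rho_0]$ and not merely $[\rho,\rho_0]$. Hence $\sup_{B_r}|u|\le\w(r)$ there, and (after shrinking $\rho_W$) the same bounds give $|u_r(0)|,|\n u_r(0)|\le 1$, so Lemma~\ref{lem:BMOest} applies to each $u_r$. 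With the modulus in hand, Remark~\ref{rem:Fcontrol} gives $F_r(u_r)\ge c(n)|u_r|\ge 0$ on $B_1$, and the derivation of \eqref{eq:Qest} is legitimate, so for $r\in[\rho/2,\rho_0]$,
\[
W'(r)\ \ge\ Q(r)\ \ge\ -C\,|\nu(r)|\int_{B_1}F_r(u_r),\qquad \nu\in L^1([0,1]).
\]

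\emph{Step 2: the comparison, the base value, and Gronwall.} Expanding $\Phi(r)$ via \eqref{eq:subpoly} for $u_r$ and the definition of $W$, then using $\a(r)\ge 1$ and $F_r(u_r)\ge 0$, gives $\Phi(r)\le W(r)+2\int_{\p B_1}|u_r-Pu_r|^2$; Lemma~\ref{lem:BMOest} together with $W_0(u_r;r)\le W(r)$ (again $\a\ge 1$) then yields $\Phi(r)\le C'(W(r)+1)$ with $C'=C'(n,\l_\pm)$, and in particular $\int_{B_1}F_r(u_r)\le\Phi(r)\le C'(W(r)+1)$, so that $W(r)\ge -1$ on the whole range. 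Feeding this into Step 1 produces the closed differential inequality $W'(r)\ge -CC'|\nu(r)|\,(W(r)+1)$, whence $r\mapsto (W(r)+1)\exp\!\big(CC'\!\int_0^r|\nu|\big)$ is nondecreasing and
\[
W(r)+1\ \le\ (W(\rho_0)+1)\,\exp\!\Big(CC'\,\|\nu\|_{L^1([0,\rho_W])}\Big),\qquad r\in[\rho/2,\rho_0].
\]
For the base value, $F_{\rho_0}(u_{\rho_0})\ge 0$ and $E_{\rho_0}(u_{\rho_0};B_1)\le M$ give $\int_{B_1}|\n u_{\rho_0}|^2\le 2M$ and $\int_{B_1}F_{\rho_0}(u_{\rho_0})\le M$, hence $W(\rho_0)\le C_0(1+M)$ with $C_0=C_0(n)$. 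Finally one fixes $\rho_W$ small — depending on $n,\l_\pm,\int_{B_1}u^2$, which is all that $\w$ and hence $\nu$ depend on, and small enough for the requirements of Step 1 — so that $CC'\|\nu\|_{L^1([0,\rho_W])}\le\log 2$; then $W(r)+1\le 2(C_0(1+M)+1)$ and therefore $\Phi(r)\le C'(W(r)+1)\le C_W(1+M)$ on $[\rho/2,\rho_0]$ with $C_W:=2C'(C_0+1)$, as desired. (The hypothesis $\sup_{r\in[\rho,\rho_0]}\int_{B_1}F_r(u_r)\le C_W(1+M)$ can be used in place of the comparison $\int F_r(u_r)\le C'(W(r)+1)$ to bound $Q$ directly, at the price of a continuity/bootstrap argument in $\rho$; in any event it is the natural hypothesis to carry along when the theorem is iterated over dyadic scales in Section~\ref{sec:proofmain}.)

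\emph{The main obstacle} is the one advertised in the introduction: the term $Q(r)=2\a(r)\int_{B_1}(\p_rF_r)(u_r)$ contains $-|u_r|\log|u_r|$, which is only integrable in $r$ once $\sup_{B_r}|u|$ is known not to grow faster than essentially $r^2|\log r|^p$ — i.e.\ only after Lemma~\ref{lem:log2reg}; a Hölder modulus $r^\a$ with $\a<2$ would render $\nu$ non‑integrable and destroy the scheme. Thus the two genuinely delicate ingredients, both inherited from earlier sections, are (a) that the suboptimal estimate of Section~\ref{sec:subopt} is \emph{exactly} strong enough, and (b) that subtracting the quadratic harmonic polynomial $Pu_r$ in Lemma~\ref{lem:BMOest} keeps the boundary contribution in the frequency-type bound under control even though $\int_{\p B_1}u_r^2$ itself may be large. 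Granting these, the Gronwall step above is routine.
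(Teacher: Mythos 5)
Your proposal is correct, and Step 1 together with the algebraic comparison is exactly the paper's machinery: the suboptimal modulus of Remark \ref{rem:modulus} making $\nu$ integrable in \eqref{eq:Qest}, the identity \eqref{eq:subpoly}, and Lemma \ref{lem:BMOest} with $W_0(u_r;r)\le W(r)$ (valid since $F_r(u_r)\ge 0$ and $\a\ge 1$). Where you genuinely diverge is the closing step. The paper plugs the hypothesis $\sup_{r\in[\r,\r_0]}\int_{B_1}F_r(u_r)\le C_W(1+M)$ into \eqref{eq:Qest}, integrates $W'\ge Q$ to get $W(r)\le 2M+C_1C_W(1+M)\int_0^{\r_W}\nu$ (extending to $[\r/2,\r]$ via $\int F_{r/2}(u_{r/2})\le C(n)\int F_r(u_r)$), and then chooses $\r_W$ so that the $C_W$-term is absorbed with a factor $\tfrac12$ and $C_W\ge 2C_3$; the statement is deliberately phrased so it can be applied repeatedly over scales in Corollary \ref{cor:optimalgrowth}, the conclusion at one scale feeding the hypothesis at the next. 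You instead run the comparison $\int_{B_1}F_r(u_r)\le\Phi(r)\le C'(W(r)+1)$ in the reverse direction inside the monotonicity formula, obtaining the closed inequality $W'\ge -CC'|\nu|(W+1)$ and a Gronwall bound from the base value $W(\r_0)\le C_0(1+M)$; the smallness of $\|\nu\|_{L^1([0,\r_W])}$ plays the role of the paper's absorption. This is sound (you correctly secure $W+1\ge 0$, the pointwise thresholds on $[\r/2,\r_0]$, and the dependence of $\r_W$ on $n,\l_\pm,\int_{B_1}u^2$ through $\w$), and it in fact proves a slightly stronger statement in which the $\sup_r\int F_r(u_r)$ hypothesis is superfluous — which would also streamline the "repeated application" in Corollary \ref{cor:optimalgrowth}. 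What the paper's formulation buys is that only the elementary integrated form $W(r)\le W(\r_0)-\int_r^{\r_0}Q$ with a fixed right-hand side is needed, whereas your route additionally leans on $W$ being absolutely continuous in $r$ so the differential inequality can be exponentiated; since the paper itself derives and integrates $W'\ge Q$, this is not a new obstruction, but it is worth a sentence of justification if you write this up.
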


\begin{proof}
	Observe that $|\n u_r(0)| \leq 1$, $|u_r(0)|\leq 1$, and $\sup_{B_r} |u|\leq \w(r)$ for $r \geq \r/2$ from Remark \ref{rem:modulus} and scaling.

	First, we claim that
	\[
	W(r) \leq W(\r_0) - \int_{r}^{\r_0} Q(s)ds \leq 2 M + C_1 C_W (1 + M) \int_{0}^{\r_W} \nu(s)ds
	\]
	for all $r \geq \r/2$. This is clear if $r\geq \r$ from \eqref{eq:Qest}, forcing $\r_W \leq r_0$. We can then use that $\int_{B_1} F_{r/2}(u_{r/2}) \leq C(n)\int_{B_1} F_{r}(u_{r})$, which is immediate from changing variables.

	Now,
	\begin{align*}
	\int_{B_1} &|\n (u_r - P u_r)|^2 + 2\a(r)\int_{B_1} F_r(u_r) \\
	&= \int_{B_1} |\n (u_r - P u_r)|^2 +  W(r) - \a(r)\int_{B_1} |\n u_r|^2 + 2 \int_{\p B_1} u_r^2 \\
	& \leq \int_{B_1} |\n (u_r - P u_r)|^2 +  W(r) - \int_{B_1} |\n u_r|^2 + 2 \int_{\p B_1} u_r^2 \\
	& = \int_{B_1} |\n (u_r - P u_r)|^2 +  W(r) - \int_{B_1} |\n (u_r - P u_r)|^2 + 2 \int_{\p B_1} (u_r - P u_r)^2 \\
	& \leq W(r) + 2 \int_{\p B_1} (u_r - P u_r)^2 \\
	& \leq W(r) + 2 C_2 (1 + W(r))\\
	& \leq 2 C_2 + (2 C_2 +1) (2M + C_1 C_W (1 + M) \int_{0}^{\r_W} \nu(s)ds)
	\end{align*}
	All the numbered constants depend only on $n$; the second line used that $\a(r)\geq 1$, after which we used \eqref{eq:subpoly}, our estimate of $W$, and Lemma \ref{lem:BMOest}. Therefore for $r \in [\r/2, \r]$,
	\begin{equation*}
	\int_{B_1} |\n (u_r - P u_r)|^2 + \int_{B_1} F_r(u_r) \leq C_3 (1 + M) +  C_4 C_W (1 + M) \int_{0}^{\r_W} \nu(s)ds.
	\end{equation*}
	We select $\r_W$ so that
	\[
	C_4 \int_{0}^{\r_W} \nu(s)ds \leq \frac{1}{2};
	\]
	this depends only on $n$. Then take $C_W$ so large that $C_3 \leq \frac{1}{2} C_W$; this gives
	\[
	\int_{B_1} |\n (u_r - P u_r)|^2 + F_r(u_r) \leq C_W (1 + M)
	\]
	as promised.
\end{proof}

\begin{corollary}\label{cor:optimalgrowth}
	Let $u$ be an $E$ minimizer on $B_1$ with $|u(0)| \leq r_1^2 |\log r_1|$ and $ |\n u(0)| \leq r_1 |\log r_1|$. Then there is a $C = C(n, \l_\pm, \|u\|_{L^2(B_1)})$ such that
	\[
	\sup_{B_r}|u| \leq C r^2 (1 + |\log r|).
	\]
	for $r_1 \leq r< \frac{1}{2}$.
	
	In particular, if $u(0) = |\n u(0)| = 0$, this holds for all $r$.
\end{corollary}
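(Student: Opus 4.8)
The plan is to deduce the corollary from Theorem~\ref{thm:logreg} by a continuation argument in the rescaling parameter, and then convert the resulting energy bound into a pointwise one via De Giorgi's estimate. I would begin by isolating the part of the statement with content: by Propositions~\ref{prop:degiorgi} and~\ref{prop:C1a}, $\sup_{B_{1/2}}|u|\le C_0(n,\l_\pm,\|u\|_{L^2(B_1)})$, and since $t\mapsto t^2(1+|\log t|)$ is increasing on $(0,1)$ the asserted bound is automatic whenever $r$ is bounded away from $0$; moreover $r\mapsto\sup_{B_r}|u|$ is nondecreasing and $t^2(1+|\log t|)$ is comparable over any dyadic range, so once the bound holds on $[c(n)r_1,\tfrac12)$ it holds on $[r_1,\tfrac12)$. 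Hence I may shrink $r_1$ below any fixed dimensional threshold and reduce to proving the estimate on $r\in[c(n)r_1,\r_0]$ for a conveniently small $\r_0$. I fix $\r_0=\r_0(n,\|u\|_{L^2(B_1)})\le\r_W$ small enough that Remark~\ref{rem:Fcontrol} applies and $\a(r)\le C$ on $[0,\r_0]$; fix $M=M(n,\l_\pm,\|u\|_{L^2(B_1)})$ with $E_{\r_0}(u_{\r_0};B_1)\le M$ (such $M$ exists, since $u_{\r_0}$ comes from the $C^{1,\a}(B_{1/2})$ bound on $u$ by a fixed rescaling); and assume without loss of generality $C_W\ge1$.

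Next I would run the continuation. Let $I=\{\r\in(0,\r_0]:\sup_{s\in[\r,\r_0]}\int_{B_1}F_s(u_s)\le C_W(1+M)\}$. It contains $\r_0$ (as $\int_{B_1}F_{\r_0}(u_{\r_0})\le E_{\r_0}(u_{\r_0};B_1)\le M<C_W(1+M)$), it is an interval with right endpoint $\r_0$, and $\r\mapsto\int_{B_1}F_\r(u_\r)$ is continuous by dominated convergence (using $|F_\r(u_\r)|\le C(1+u_\r^2)$, with $\sup_{B_1}|u_\r|$ locally bounded in $\r$), so $I=[\r_*,\r_0]$ with $\r_*<\r_0$. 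I claim $\r_*\le c_0(n)r_1$. If not, $\r_*$ exceeds a suitable dimensional multiple of $r_1$, so the elementary inequalities $|\n u(0)|\le r_1|\log r_1|\le\tfrac{\r_*}{2}|\log\r_*|$ and $|u(0)|\le r_1^2|\log r_1|\le\tfrac{\r_*^2}{4}|\log\r_*|$ hold (valid for $r_1$ below a dimensional threshold since $t\mapsto t|\log t|$ and $t\mapsto t^2|\log t|$ are increasing near $0$), and all hypotheses of Theorem~\ref{thm:logreg} are met with $\r=\r_*$. Its conclusion gives in particular $\sup_{s\in[\r_*/2,\r_0]}\int_{B_1}F_s(u_s)\le C_W(1+M)$, i.e.\ $\r_*/2\in I$, contradicting $\r_*=\inf I$. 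Thus $\r_*\le c_0(n)r_1$, and one more application of Theorem~\ref{thm:logreg}, with $\r$ a fixed dimensional multiple of $r_1$ lying in $[\r_*,\r_0]$, gives
\[
\sup_{r\in[c(n)r_1,\r_0]}\ \int_{B_1}|\n(u_r-Pu_r)|^2+\int_{B_1}F_r(u_r)\ \le\ C(n,\l_\pm,\|u\|_{L^2(B_1)}).
\]

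Finally I would turn this into the growth estimate. For $r$ in this range $F_r(u_r)\ge c(n)|u_r|$ by Remark~\ref{rem:Fcontrol} (applicable since $\sup_{B_r}|u|\le\w(r)$ there by Remark~\ref{rem:modulus}), so $\|u_r\|_{L^1(B_1)}\le C$. Since a homogeneous degree-$2$ harmonic polynomial has zero mean on $B_1$, $u_r-Pu_r$ has the same $B_1$-average as $u_r$, so combining the Poincar\'e inequality for $u_r-Pu_r$ with the bound on $\int_{B_1}|\n(u_r-Pu_r)|^2$ and the finite-dimensionality of the polynomial space gives $\|u_r\|_{L^2(B_1)}\le C(n,\l_\pm,\|u\|_{L^2(B_1)})$. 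Proposition~\ref{prop:degiorgi} (with $|F_r(t)|\le C_*(1+t^2)$ uniform in $r$) then yields $\|u_r\|_{L^\infty(B_{1/2})}\le C(n,\l_\pm,\|u\|_{L^2(B_1)})$, and undoing the scaling, $\sup_{B_{r/2}}|u|=\mu(r)\|u_r\|_{L^\infty(B_{1/2})}\le C\mu(r)\le Cr^2(1+|\log r|)$. Relabeling and using the reductions above to fill in $[r_1,c(n)r_1]$ and the range near $\tfrac12$ finishes the corollary; the final assertion follows by letting $r_1\to0$, since then the hypotheses hold for every $r_1$ and the constant does not depend on $r_1$.

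The step I expect to be the main obstacle is the continuation: one must ensure that the self-improvement built into Theorem~\ref{thm:logreg} (upgrading the $F_r$-bound from $[\r,\r_0]$ to $[\r/2,\r_0]$) can be iterated down to scale $\sim r_1$, which needs both the continuity of $\r\mapsto\int_{B_1}F_\r(u_\r)$ to close the induction and a careful check that the smallness conditions $|\n u(0)|\le\tfrac{\r}{2}|\log\r|$, $|u(0)|\le\tfrac{\r^2}{4}|\log\r|$ persist at $\r\sim r_1$ despite the logarithmic factors. A secondary point is that extracting an $L^2$ bound on $u_r$ genuinely uses the Weiss-energy conclusion of Theorem~\ref{thm:logreg}: the a priori suboptimal modulus of Remark~\ref{rem:modulus} alone only controls $\|u_r\|_{L^2(B_1)}$ up to a factor growing logarithmically as $r\to0$.
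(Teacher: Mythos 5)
Your argument is correct, and its skeleton is the paper's: iterate Theorem \ref{thm:logreg} down from a fixed scale $\r_0\le\r_W$ to scale $\sim r_1$, then use $F_r(u_r)\ge c|u_r|$ (Remark \ref{rem:Fcontrol}) and Proposition \ref{prop:degiorgi} to convert the resulting uniform bound into $\sup_{B_{1/2}}|u_r|\le C$ and rescale. Your continuation argument via the set $I$ and continuity of $\r\mapsto\int_{B_1}F_\r(u_\r)$ is a valid formalization of the paper's ``apply the theorem repeatedly''; in fact plain dyadic halving suffices, since the theorem's conclusion on $[\r/2,\r_0]$ is verbatim its hypothesis at $\r/2$, so continuity is not strictly needed. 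Where you genuinely diverge is the control of $Pu_r$: the paper bounds $\int_{B_1}|\n Pu_{r\r}|^2$ by $\bigl(\int_{\p B_1}|u_{r\r}|\bigr)^2$ and must select a good radius $\r\in[\tfrac12,1]$ by Chebyshev to pass from a boundary integral to the solid $L^1$ bound, then assembles a Dirichlet bound for $u_{r/2}$; you instead use that quadratic harmonic polynomials have zero mean on $B_1$, so Poincar\'e applied to $u_r-Pu_r$ together with $\|u_r\|_{L^1(B_1)}\le C$ gives $\|u_r-Pu_r\|_{L^2(B_1)}\le C$, hence $\|Pu_r\|_{L^1(B_1)}\le C$ and, by equivalence of norms on the finite-dimensional space of such polynomials, $\|u_r\|_{L^2(B_1)}\le C$. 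This is slightly more economical (no slicing, no auxiliary radius) and it hands Proposition \ref{prop:degiorgi} exactly the $L^2$ quantity its statement requires, whereas the paper's final line quotes it with a Dirichlet-energy dependence that implicitly still needs the $L^1$/average control; neither route buys anything the other lacks. The constant-chasing you flag --- that $|\n u(0)|\le r_1|\log r_1|$ and $|u(0)|\le r_1^2|\log r_1|$ only yield the hypotheses of Theorem \ref{thm:logreg} for $\r\ge c(n)r_1$ with $r_1$ small, compensated by monotonicity of $r\mapsto\sup_{B_r}|u|$ and dyadic comparability of $t^2(1+|\log t|)$ to fill in $[r_1,c(n)r_1]$ and the range near $\tfrac12$ --- is handled correctly, and the paper glosses over the same point.
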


\begin{proof}
	Apply Remark \ref{rem:modulus} to $u$ to deduce that 
	\begin{equation}\label{eq:logoff}
	\sup_{B_r} |u| \leq \w(r)
	\end{equation}
	for $r \in [r_1, \frac{1}{2}]$. From Remark \ref{rem:Fcontrol}, this gives $F_r(u_r) \geq c |u_r|$ for $r_1 \leq r \leq r_0$. Set $\r_0 = \min\{r_0, \r_W \}$ and apply Theorem \ref{thm:logreg} repeatedly (with $M$ set to $E_{r_0}(u_{r_0}; B_1) \geq \int_{B_1}F_{r_1}(u_{r_1})$) to get
	\begin{equation}\label{eq:fullgrowth}
	\sup_{r \in [r_1,\r_0]} \int_{B_1} |\n (u_r - P u_r)|^2 + F_r(u_r) \leq C_W (1 + M).
	\end{equation}
	
	Next, we consider $P u_r$. Select an orthonormal (in $L^2(\p B_1)$) basis for the quadratic harmonic polynomials on $B_1$, $\{Q_i\}_{i = 1}^{J}$ (the space spanned by them is isomorphic to that of trace-free symmetric matrices over $\R^n$, via $Q \mapsto D^2 Q(0)$, so $J = n(n+1)/2 -1$). Let $S = \max_{B_1, i} |Q_i|$. Then
	\[
	\int_{B_1}|\n P u_r|^2 = 2 \int_{\p B_1} |P u_r|^2 = 2 \sum_{i = 1}^J (\int_{\p B_r} Q_i u_r)^2 \leq 2 J S^2 ( \int_{\p B_1} |u_r| )^2.
	\]
	By applying Chebyshev's inequality, we have that for some $\r \in [\frac{1}{2}, 1]$,
	\[
	\int_{B_1}|\n P u_{r\r}|^2 \leq C ( \int_{\p B_1} |u_{r\r}| )^2 \leq C (\int_{\p B_{\r}} |u_r|)^2 \leq C (\int_{B_1} |u_r|)^2.
	\]
	As $F_r(u_r) \geq c |u_r|$, this may be rewritten:
	\[
	\int_{B_1}|\n P u_{r\r}|^2 \leq C (\int_{B_1} F_r(u_r))^2 \leq C (1 + M)^2.
	\]
	Combining this with the estimate on $u_{s} - Pu_s$ from \eqref{eq:fullgrowth},
	\[
	\int_{B_1}|\n u_{r/2}|^2 \leq C\int_{B_{\frac{1}{2\r}}}|\n u_{r\r}|^2 \leq C \int_{B_1}|\n u_{r\r}|^2 \leq C(M).
	\]
	This is valid for every $r_1 \leq r \leq r_0$, regardless of the choice of $\r$ previously.
	
	Finally, we may directly apply Proposition \ref{prop:degiorgi} to obtain that
	\[
	\sup_{B_{1/2}} |u_r| \leq C(\int_{B_1}|\n u_r|^2) \leq C(M)
	\]
	for all $r_1/2 \leq r \leq r_0/2$. Rescaling this gives the conclusion so long as $r \leq r_0/2$, but for $r \geq r_0/2$ the conclusion is immediate from \eqref{eq:logoff}.
\end{proof}

\section{Optimal Regularity} \label{sec:proofmain}

Corollary \ref{cor:optimalgrowth} provides an optimal growth control for a minimizer $u$ near points where $u(0) = |\n u(0)| = 0$, and a useful estimate when $|\n u(0)|$ is small. In order to turn this into a regularity statement, we must consider the opposite situation: $u(0) = 0$ but $\n u(0)$ is large. The key point here is that in this setting, the behavior in directions orthogonal to $\n u$ is extremely regular, so the problem is largely one-dimensional. To exploit this we use a change of variables argument.

\begin{lemma}\label{lem:biggrad}
	Let $u$ be a minimizer of $E_r$ on $B_1$ with $u(0) = 0$, and assume that $|\n u(0)| \geq \frac{1}{4}$. Then
	\[
		|\n u(x) - \n u(y)| \leq C | x - y|(1 + \frac{\log |x - y|}{\log r})
	\]
	for $x, y \in B_{r_1}$, where $C, r_1$ depend only on $n$, $\l_\pm$, and $\int_{B_{1}}u^2$.
\end{lemma}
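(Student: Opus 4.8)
The plan is to exploit the fact that when $|\n u(0)|$ is bounded below, the level set $\{u = 0\}$ is a smooth graph near the origin, and in the region where $u$ is positive (resp.\ negative) the equation $\Delta u = -\l_+ \log u^+$ (resp.\ $\l_-\log u^-$) has a right-hand side that depends only on $u$ itself, which is comparable to the signed distance to $\{u=0\}$. So I would first use the $C^{1,\a}$ estimate (Proposition \ref{prop:C1a}) together with $|\n u(0)| \geq \tfrac14$ to fix a radius $r_1 = r_1(n,\l_\pm,\int u^2)$ on which $|\n u| \geq \tfrac18$ throughout $B_{r_1}$; after a rotation we may assume $\n u(0) = |\n u(0)| e_n$, and then $\p_{x_n} u \geq c > 0$ on $B_{r_1}$. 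Consequently $\{u > 0\}\cap B_{r_1}$, $\{u < 0\}\cap B_{r_1}$, and $\{u=0\}\cap B_{r_1}$ are separated by the graph $\{x_n = \psi(x')\}$ of a $C^{1,\a}$ function $\psi$, and $u(x) \approx c_0(x') (x_n - \psi(x'))$ with $c_0 \in [c,C]$.

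Next I would perform a change of variables flattening $\{u=0\}$ — set $y' = x'$, $y_n = x_n - \psi(x')$ — which conjugates $\Delta$ into a uniformly elliptic operator $L$ in divergence form with $C^{0,\a}$ coefficients, and transforms the equation into $L v = -\l_+ \log v^+ \cdot 1_{\{v>0\}} + \l_- \log v^- \cdot 1_{\{v<0\}}$ in $B_{r_1'}^+ \cup B_{r_1'}^-$ with $\{v = 0\} = \{y_n = 0\}$, where $v(y) = u(x(y))$. On each side separately, $v$ is, up to the $C^{0,\a}$ coefficients, a solution of a one-phase logarithmic obstacle-type problem with its zero set being a hyperplane; here $v \approx c_0(y') y_n$, so $\log|v| = \log|y_n| + O(1)$, i.e.\ the right-hand side is controlled by $\log |y_n|$, which is exactly an $L^p$ (indeed, BMO-type) function. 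I would then run a Schauder/$W^{2,p}$-type iteration on the half-balls: since $|\log|v|| \lesssim 1 + |\log y_n|$ is in $L^p$ for every $p<\infty$ and even has the logarithmic modulus $\int_0^\rho \frac{|\log t|}{t}\,$-type control, interior-up-to-the-flat-boundary estimates give that $\n v$ — equivalently the tangential derivatives $\p_{y'} v$ and the one-sided normal derivative — has a log-Lipschitz modulus. Reflecting/matching the two one-sided normal derivatives across $\{y_n=0\}$ (they agree because $\n u$ is continuous, $u$ being $C^{1,\a}$ globally by Proposition \ref{prop:C1a}) upgrades this to a two-sided estimate $|\n v(y) - \n v(z)| \leq C|y-z|(1 + |\log|y-z||)$ on a smaller half-ball pair.

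Finally I would translate the modulus back through the (bi-$C^{1,\a}$, hence bi-Lipschitz) change of variables $y\mapsto x$; this preserves the log-Lipschitz modulus up to constants, yielding $|\n u(x) - \n u(y)| \leq C|x-y|(1 + |\log|x-y||)$ on $B_{r_1}$. The claimed dependence on $\log r$ — namely the bound $C|x-y|(1 + \tfrac{\log|x-y|}{\log r})$ rather than $C|x-y|(1+|\log|x-y||)$ — comes from tracking the scaling: $u$ here is an $E_r$ minimizer, so the nonlinearity carries the factor $\tfrac{1}{1-2\log r}$ in $F_r$, and running the above estimates for $E_r$ in place of $E$ replaces each occurrence of $|\log(\cdot)|$ coming from the right-hand side by $\tfrac{|\log(\cdot)|}{|\log r|}$; the ``$1+$'' absorbs the $C^{1,\a}$ part which is $r$-independent. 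I expect the main obstacle to be the estimate on the half-ball in the flattened coordinates: one must show that a solution of $Lv = h$ with $\|h\|_{L^\infty(B_\rho^+)}$ growing only like $1 + |\log\rho|$ (but $h$ itself merely $L^p$, not continuous) and with zero Dirichlet data on the flat part has a log-Lipschitz gradient up to that flat part — this is where the precise Dini-type integrability of $t\mapsto |\log t|/t$ against the decay of $h$ near $\{y_n=0\}$ must be exploited carefully, rather than a naive $W^{2,p}$ bound which would only give $C^{1,\a}$.
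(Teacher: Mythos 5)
Your setup (zero set is a $C^{1,\alpha}$ graph, reduce to a problem on half-balls with the singular right-hand side) is reasonable, but the step you yourself flag as ``the main obstacle'' is a genuine gap, and it is exactly the point where your route diverges from what can be made to work. After flattening $\{u=0\}$, your right-hand side is $f_r(v(y))$, i.e.\ still a function of the unknown; the only information you retain about it is the size bound $|f_r(v)|\leq C(1+\tfrac{|\log |y_n||}{|\log r|})$, since its tangential modulus of continuity near $\{y_n=0\}$ is governed by the tangential oscillation of $\log v$, which is essentially the estimate you are trying to prove (circular). And a size bound alone is not enough: estimating $\n v(x)-\n v(z)$ through the Green representation (or through a freezing/iteration argument), the far-field contribution is of order $|x-z|\int_{|x-z|}^1 s^{-1}\bigl(1+\tfrac{|\log s|}{|\log r|}\bigr)\,ds \approx |x-z|\bigl(1+\tfrac{|\log|x-z||^2}{|\log r|}\bigr)$, i.e.\ you lose a logarithm. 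This is the same phenomenon as in Lemma \ref{lem:log2reg}, where a one-log modulus on the nonlinearity only yields the two-log modulus $\eta_1(t)\approx t(1+|\log t|^2)$. The extra log is not harmless: in the proof of the main theorem the lemma is applied to $u_s$ at points with $|x|\ll s$, and $|x|\,|\log(|x|/s)|^2/|\log s|$ is not bounded by $C|x|(1+|\log|x||)$, so a two-log version of the lemma would not suffice.

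The paper avoids this by a different change of variables: the partial hodograph map $\psi(x',x_n)=(x',u(x',x_n))$, which is a diffeomorphism precisely because $\p_n u$ is bounded below. In the new coordinates the singular term becomes $f_r(y_n)$, a function of the coordinate $y_n$ alone, hence \emph{constant in the tangential directions}. This is the structural input your flattening destroys: it allows the partial Schauder estimate of Dong--Kim to bound $v_{ij}$ in $C^{0,\a}$ for all $(i,j)\neq(n,n)$ despite the unbounded right-hand side, after which $v_{nn}$ is read off pointwise from the equation, $|v_{nn}|\leq C(1+\tfrac{|\log|y_n||}{|\log r|})$, and a single integration in $y_n$ yields the one-log modulus $|x-y|(1+\tfrac{|\log|x-y||}{|\log r|})$, which transfers back to $u$. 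So to repair your argument you would either need to supply the missing up-to-the-boundary estimate with the sharp single-log modulus for merely size-bounded data (which, as the computation above indicates, is false without additional structure or cancellation), or replace the flattening by the hodograph transform, at which point you are following the paper's proof. The reflection/matching of one-sided normal derivatives is also unnecessary in that scheme, since the hodograph picture treats both phases at once through the coordinate $y_n=u$.
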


\begin{proof}
	First, apply Lemma \ref{lem:log2reg} to $u$ to obtain that
	\[
		|\n u(x) - \n u(y)| \leq C | x - y||\log |x - y||^2
	\]
	for $x, y \in B_{1/2}$, and $|\n u(0)|\leq C$. We may therefore write
	\[
		|u(x) - \n u(0) x|\leq \w(|x|),
	\]
	where $\w$ is as in Remark \ref{rem:modulus}. Select a coordinate system $(x', x_n)$ with $e_n = \frac{\n u(0)}{|\n u(0)|}$, and let $Q_s = \{(x', x_n) : |x'|\leq s, |x_n|\leq s  \}$ be a cylinder.

	For $s$ small and fixed, we have that on $Q_s$ the map $t \mapsto u(x + t e_n)$ is strictly monotone, and so the change of variables
	\[
		\psi(x', x_n) = (x', u(x', x_n)) : Q_s \rightarrow U
	\]
	is a $C^{1,\a}$ diffeomorphism. Set $v : U \rightarrow \R$ to be the $n$-th component of the inverse map $\psi^{-1} : U \rightarrow Q_s$. Direct computation gives the relations
	\[
		v_n \circ \psi = \frac{1}{u_n} \qquad v_i \circ \psi = - \frac{u_i}{u_n},
	\]
	where $i < n$ and subscripts denote derivatives. A further computation gives
	\[
		(\Delta u) \circ \psi^{-1} = L v := - \frac{1}{v_n} \sum_{i < n} v_{ii} + \frac{2}{v_n^2} \sum_{i < n} v_i v_{n i} - \frac{v_{nn}}{v_n^3} (1 + \sum_{i < n} v_i^2).
	\]
	These computations may be found in \cite{KN}.
	
	Now, $|u_n - u_n(0)|\leq \frac{1}{8}$ for small enough $s$, while $u_n(0) = |\n u(0)| \in [\frac{1}{4}, C]$, so $u_n, v_n \in [c, C]$ on $Q_s, U$, respectively. On the other hand, $|u_i| = |u_i - u_i(0)|\leq C s |\log s|^2$, so $|v_i|$ is small in terms of $s$. Thus for small enough $s$, $L$ is elliptic on $U$, with ellipticity constant $C$. Hence on $U$, $v$ satisfies the elliptic equation
	\[
		L v = f_r(u \circ \psi^{-1}) = f_r(y_n).
	\]
	We also have $|v|\leq C$ on $U$. From standard elliptic theory and the fact that $|f_r(y_n)|\leq C( 1 + |\frac{\log |y_n|}{\log r}|)$, we may obtain that $\|v_{ij}\|_{L^p}(Q_{s_0}) \leq C$ for some fixed $Q_{s_0} \cc U$ and $p > n$.

% It is possible to just differentiate the PDE at this point instead, to get $L v_i =  \sum_{jk} v_{ij} v_{jk} \a_{ijk}(\n u)$. Then as $v_{jk}$ is in $L^p$, this is like an L^p first-order coefficient, and so using W^{2, p} estimates gives $v_i \in W^{2, p}$  and so $\n v_i \in C^{1,\a}$. It seems an approximation argument is still needed to justify differentiating the PDE, though, so this did not seem easier.
	Apply the partial Schauder estimate from Theorem 2.10 in \cite{DK} to $v$ on $Q_{s_0}$ to obtain that
	\[
		[v_{ij}]_{C^{0,\a}(Q_{s_0/2})} \leq C( \sup_{y'_1 \neq y'_2} \frac{|f_r(t) - f_r(t)|}{|y'_1 - y'_2|^\a} + C(\|\n v\|_{C^{0,\a}}) \|D^2 v\|_{L^p}(Q_{s_0}) ) \leq C
	\]
	for any $\a < 1$, $i \leq n$, and $j < n$. While $f_r(y_n)$ was required to be continuous there, the estimate is independent of its boundedness or continuity, and so can be obtained in the fashion written here by an approximation argument. As, for example, $|D^2 u(0, s_0/4)| \leq C$ from local elliptic estimates, this can used to bound the full $C^{0,\a}$ norm of these derivatives:
	\[
		\|v_{ij}\|_{C^{0,\a}(Q_{s_0/2})} \leq C.
	\]
	As a consequence of this, we may rewrite $L v$ as
	\[
		f_r(y_n) = L v (y) = h_1(y) - h_2(y) v_{nn}(y),
	\]
	where $h_1, h_2$ are $C^{0, \a}$ functions of $y$ with $h_2 \geq 1$. From this, we immediately obtain that 
	\[
		|v_{nn}(y)|\leq C( 1 + \frac{|\log |y_n||}{|\log r|}),
	\]
	and so
	\[
		|v_i(x) - v_i(y)|\leq C|x - y|(1 + \frac{|\log |x - y||}{|\log r|})
	\]
	by integrating. Changing variables back, we learn that on $Q_{s_1} \ss \phi^{-1}(Q_{s_0})$,
	\[
		|\n u(x) - \n u(y)|\leq C|x - y|(1 + \frac{|\log |x - y||}{|\log r|}).
	\]
\end{proof}

\begin{theorem}
	Let $u$ be minimizer of $E$ on $B_1$. Then for $x \in B_{1/2}$,
	\[
		|\n u(x) - \n u(0)|\leq C |x| (1 + |\log |x||),
	\]
	where $C = C(n, \l_\pm, \int_{B_{1}}u^2)$.
\end{theorem}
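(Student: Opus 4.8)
The plan is to bootstrap the growth and one-phase estimates of the previous sections into the pointwise bound at $0$, splitting into cases according to how $|u(0)|$ and $|\nabla u(0)|$ compare with the scale $r=|x|$ at which we want the estimate. Throughout we may assume $|x|$ is below a dimensional constant, since otherwise the conclusion is just the bound $|\nabla u|\le C$ coming from Propositions \ref{prop:C1a} and \ref{prop:degiorgi}.

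\emph{Small value and gradient at scale $|x|$.} First I would treat the case $|u(0)|\le\tfrac14|x|^2|\log|x||$ and $|\nabla u(0)|\le\tfrac12|x||\log|x||$. Here Corollary \ref{cor:optimalgrowth} (with $r_1=|x|$) gives $\sup_{B_{2|x|}}|u|\le C|x|^2(1+|\log|x||)$, so the rescaled minimizer $u_{2|x|}$ is bounded on $B_1$; Proposition \ref{prop:C1a} (which applies to $E_\rho$-minimizers once they are bounded, with constants uniform for $\rho<\rho_0$) then yields $|\nabla u_{2|x|}|\le C$ on $B_{1/2}$. Unscaling, $|\nabla u(x)|=\tfrac{\mu(2|x|)}{2|x|}|\nabla u_{2|x|}(x/2|x|)|\le C|x|(1-2\log|x|)$, and combined with the hypothesis on $|\nabla u(0)|$ this is the desired bound. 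Note the single logarithm is produced entirely by the renormalization factor $\mu(\rho)/\rho=\rho(1-2\log\rho)$.

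\emph{Large and intermediate gradient with $u(0)=0$.} If $|\nabla u(0)|$ exceeds a fixed threshold, I would apply Lemma \ref{lem:biggrad} to the harmless rescaling $u_{1/2}$ (an $E_{1/2}$-minimizer, so the factor $1/|\log r|$ there becomes a fixed constant), obtaining $|\nabla u(x)-\nabla u(y)|\le C|x-y|(1+|\log|x-y||)$ for $x,y$ near $0$. If $|\nabla u(0)|=e$ lies between the Case-1 threshold and this fixed one, I would choose the scale $\rho_0$ with $\mu(\rho_0)/\rho_0=4e$, possible since $\rho\mapsto\mu(\rho)/\rho$ is an increasing bijection of $(0,1)$ onto a bounded interval vanishing at $0$; then $|\nabla u_{\rho_0}(0)|=\tfrac14$, the corollary applies at scale $\rho_0$ (since $e\lesssim\rho_0|\log\rho_0|$) making $u_{\rho_0}$ bounded, and Lemma \ref{lem:biggrad} applied to $u_{\rho_0}$ gives a bound in which the factor $1/|\log\rho_0|$ cancels against the $(1-2\log\rho_0)=\mu(\rho_0)/\rho_0^2$ produced on unscaling, leaving $|\nabla u(x)-\nabla u(0)|\le C|x|(1+|\log|x||)$ for $|x|\lesssim\rho_0$. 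For the residual range $|x|\gtrsim\rho_0$ one has $e\lesssim|x|(1+|\log|x||)$, so the Case-1 argument with thresholds relaxed by a constant applies.

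\emph{Interior points, $u(0)\neq0$, and the main obstacle.} Let $d=\dist(0,\{u=0\})>0$ and pick a nearest zero $z$. For $|x|\ge d/4$, the point $z$ lies within $C|x|$ of both $0$ and $x$, so the estimate at the free boundary point $z$ — provided by the previous cases, applied at $z$ — together with the triangle inequality gives the bound at $0$. For $|x|<d/4$, the ball $B_{d/2}(0)$ lies in a single phase, say $\{u>0\}$, where $\Delta u=-\lambda_+\log u$; bounding $\sup_{B_{d/2}(0)}u$ from above by the growth estimate at $z$ and $\log u$ from below on $B_{3d/4}(0)$, interior $W^{2,p}$ and Schauder estimates give $\|D^2u\|_{L^\infty(B_{d/8}(0))}\le C(1+|\log d|)$, and since $|x|<d$ we have $1+|\log d|\le 1+|\log|x||$; when $\nabla u(z)$ is not small at scale $d$ one instead notes the free-boundary estimate at $z$ already covers a neighborhood of $0$. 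The delicate point throughout is the bookkeeping of logarithms so that exactly one survives: the renormalization $\mu(\rho)/\rho$ when unscaling a $C^{1,\alpha}$ bound, the $1/|\log\rho|$ gain of Lemma \ref{lem:biggrad}, and the size of $\log|u|$ in the interior estimate each contribute one, and they must be played against each other, which is what dictates the choices of rescaling scale and of the case thresholds. A secondary technical issue is that the lower bound on $|u|$ in the interior case must be obtained without the non-degeneracy of Section \ref{sec:nondegeneracy}, which is only proved later; the way out is to extract a crude bound $u\gtrsim d^{N}$ on $B_{3d/4}(0)$ directly from the equation — where $u$ is small, $\Delta u=-\lambda_+\log u$ is strongly positive, forcing $u$ to grow quickly away from its small values — which suffices since it gives $|\log u|\lesssim|\log d|$.
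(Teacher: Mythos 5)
Your Cases 1 and 2 are, up to reparametrizing the paper's stopping scale $s$ by the scale $|x|$ (resp.\ by the scale $\rho_0$ where $|\n u_{\rho_0}(0)|=\tfrac14$), the same argument as in the paper, and the logarithm bookkeeping there is correct. The genuine gap is in your treatment of $u(0)\neq 0$, specifically the interior sub-case $|x|<d/4$ with $d=\dist(0,\{u=0\})$. Two problems. First, the ``crude bound $u\gtrsim d^N$ on $B_{3d/4}(0)$ directly from the equation'' is not justified: the observation that $\Delta u=-\l_+\log u$ is large where $u$ is small only shows that the sublevel set $\{u<\d^{1/2}\}$ is thin (a comparison with $\frac{\l_+|\log\d|}{4n}|x-y_0|^2$ gives exactly this and nothing more); it does not produce a pointwise lower bound for $u$ at a point at distance $d$ from $\{u=0\}$. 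Such a bound is a pointwise nondegeneracy statement, stronger than Lemma \ref{lem:lb} (which only controls $\sup_{B_r}u^\pm$), and is nowhere available to you. Second, and more decisively, even granting $u\geq c\,d^N$ on $B_{3d/4}(0)$ with $N$ large, your Schauder step fails: an $L^\infty$ bound $\|\Delta u\|_{L^\infty(B_{d/2})}\leq CN(1+|\log d|)$ only yields interior $C^{1,\a}$ control, which for $|x|\ll d$ gives $|\n u(x)-\n u(0)|\leq C d^{1-\a}|x|^{\a}(1+|\log d|)$, not the linear-in-$|x|$ bound you need. To get $\|D^2u\|_{L^\infty(B_{d/8})}\leq C(1+|\log d|)$ you must control a H\"older (or Dini) seminorm of $\log u$ at scale $d$, and with $\inf u\gtrsim d^N$ one only gets $d^{\a}[\log u]_{C^{0,\a}(B_{d/4})}\lesssim d^{2-N}(1+|\log d|)$ (or, via a Dini integral, an extra factor $|\log d|$, reproducing the suboptimal double-log of Lemma \ref{lem:log2reg}). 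So the argument closes only if $\inf u\gtrsim d^2$ up to logarithms near $0$ --- precisely the nondegeneracy you cannot assume.

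The paper avoids this entirely by choosing the working scale through $u(0)$ rather than through $\dist(0,\{u=0\})$: it stops at the first scale $s$ where both $|u_s(0)|\leq\e$ and $|\n u_s(0)|\leq\tfrac12$. If the value constraint is the one that binds, then $u(0)=\e\,\m(s)$ exactly, and the uniform Lipschitz bound for $u_s$ gives $u_s\geq\tfrac{\e}{2}$ on $B_{c\e}$ for free; on that ball $f_s(u_s)$ is bounded \emph{with bounded gradient}, so Schauder gives the clean estimate $|\n u_s(x)-\n u_s(0)|\leq C|x|$, and the single logarithm in the final bound comes solely from the rescaling factor $\m(s)/s$. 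If instead the gradient constraint binds, a zero $z$ of $u_s$ is produced at rescaled distance at most $\tfrac1{16}$ from the origin, so the ball where Lemma \ref{lem:biggrad} applies automatically contains $0$; note that your variant, which recenters at the \emph{nearest} zero (at rescaled distance exactly $1$ after scaling by $d$), also runs into a coverage problem, since the neighborhood furnished by Lemma \ref{lem:biggrad} at $z$ has a small fixed radius and need not contain the origin. To repair your proof you would essentially have to replace the $d$-based decomposition by the paper's stopping-time decomposition in the rescaled variables.
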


\begin{proof}
	Applying Lemma \ref{lem:log2reg}, we know that
	\[
		|\n u(x) - \n u(0)| \leq C |x|(1 + |\log |x|)^2
	\]
	on $B_{1/2}$. This implies the conclusion for $|x| \geq \frac{1}{8}$, so we only need to consider $|x|\leq \frac{1}{8}$ below.

	We first consider the case of $u(0) = 0$. Set $s\in [0, \frac{1}{2}]$ to be the smallest value of $r$ for which 
	\[
		|\n u_r(0)| = \frac{1}{r(1 - 2\log r)} |\n u(0)| \leq \frac{1}{4}.
	\]
	Then for $r \in [s, 1/2]$, we may apply Corollary \ref{cor:optimalgrowth} to obtain that
	\[
		\sup_{B_1} |u_r| \leq C.
	\]
	Applying Propositions \ref{prop:degiorgi} and \ref{prop:C1a} to $u_r$ gives
	\[
		\sup_{B_{1/2}} |\n u_r| \leq C.
	\]
	If $|x|\geq \frac{s}{2}$, then using $r = 2|x|$ gives
	\begin{align*}
		|\n u(x) - \n u(0)| &\leq |\n u(x)| + |\n u(0)| \\
		&\leq r(1 - 2\log r)|\n u_r(x/r)| + \frac{1}{4}r(1 - 2\log r)\\
		& \leq Cr(1 - 2\log r)\\
		& \leq C |x|(1 + |\log |x||).
	\end{align*}
	On the other hand, if $|x| \leq \frac{s}{2}$, we apply Lemma \ref{lem:biggrad} to $u_s$, to get
	\begin{align*}
		|\n u(x) - \n u(0)| &\leq s(1 - 2\log s)|\n u_s(x/s) - \n u_s(0)|\\
		& \leq Cs|\log s| \frac{|x|}{s} (1 + \frac{|\log\frac{|x|}{s}|}{|\log s|}) \\
		&\leq C |x|(1 + |\log |x||).
	\end{align*}
	
	Now we turn to the case of $u(0) \neq 0$. We proceed similarly to the above, except now set $s\in [0, \frac{1}{2}]$ to be the smallest value of $r$ for which 
	\begin{equation}\label{eq:final1}
	|\n u_r(0)| \leq \frac{1}{2} \qquad |u_r(0)| \leq \e,
	\end{equation}
	with $\e$ to be chosen.	For $r \in [s, 1/2]$, we may apply Corollary \ref{cor:optimalgrowth} to obtain
	\[
		\|\n u_r\|_{C^{0,\a}(B_{1/2})} \leq C
	\]
	as before, and this implies the conclusion so long as $|x|\geq c_0 s$ for any fixed $c_0$, also to be chosen. At this point there are two cases to consider, depending on which of the two criteria in \eqref{eq:final1} failed to be satisfied first.
	
	Consider first the case that $|\n u_s(0)| = \frac{1}{2}$ and $|u_s(0)| \leq \e$. We know that $\|\n u_s\|_{C^{0,\a}(B_{1/2})} \leq C$, so
	\[
		|u_s(y) - y \cdot \n u_s(0)| \leq |u(0)| + C|y|^{1 + \a} \leq \e + C|y|^{1 + \a}.
	\]
	Set $y = \pm t \frac{\n u_s(0)}{|\n u_s(0)|}$; then
	\[
		|u_s(y) \pm \frac{1}{2} t| \leq \e + Ct^{1+\a} \leq \e + \frac{1}{8}t
	\]
	if we fix $ t$ with $8 Ct^\a \leq 1$ and $t \leq \frac{1}{16}$. Choose $\e = \frac{t}{8}$; then $u_s(y)$ has opposite sign at the two values of $t$, and so there must be a point $z \in B_{t}$ with $u_s(z) = 0$. From the same estimates, we have that $|\n u(z)| \geq \frac{1}{2} - Ct^\a \geq \frac{1}{4}$. Apply Lemma \ref{lem:biggrad} to $u_s$ on $B_{1}(z)$ to obtain that
	\[
		|\n u_s(x) - \n u_s(y)| \leq C |x - y|(1 + \frac{|\log |x - y||}{|\log s|})
	\]
	for $x, y \in B_{1/2}(z)$. In particular this holds with $y = 0$ and $ x\in B_{1/4} \ss B_{1/2}(z)$, which leads to
	\[
		|\n u(x) - \n u(0)| \leq C |x| (1 + |\log |x||)
	\]
	for $ x\in B_{s/4}$ after rescaling.
	
	Now for the opposite case: $u_s(0) = \e$, with $\e$ chosen above (changing sign if negative, without loss of generality) while $|\n u_s(0)|\leq \frac{1}{2}$. We still have that $\|\n u_s\|_{C^{0,\a}(B_{1/2})} \leq C$, so on $B_{c \e}$, this implies $u_s \geq \frac{\e}{2}$. Using only that $u_s \in [\frac{\e}{2}, C]$, $|\n u_s|\leq C$, and the PDE $-\Delta u_s = f_s(u_s)$ allows us to estimate directly that
	\[
		|\Delta u_s| \leq C \qquad |\n \Delta u_s| \leq C.
	\]
	From standard Schauder estimates,
	\[
		|\n u_s(x) - \n u_s(0)|\leq C |x|
	\]
	for $x \in B_{c \e/2}$, and rescaling this gives
	\[
		|\n u(x) - \n u(0)| = s (1 - 2 \log s) |\n u_s(x/s) - \n u_s(0)| \leq C s (1 - 2 \log s) \frac{|x|}{s} \leq C |x| (1 + |\log |x||)
	\]
	for $|x|\leq cs\e/2$.
	
	Set $c_0 = \min\{ c \e/2, \frac{1}{4}\}$ above to obtain the conclusion.
\end{proof}

\section{Nondegeneracy} \label{sec:nondegeneracy}

Unlike the maximal growth estimate of the previous section, showing a minimal growth rate for the solution away from free boundary points can be done with elementary modifications of standard arguments (see e.g. \cite{PSU} for the classical case).

\begin{lemma}\label{lem:lb}
	Let $u$ be an $E$ minimizer on $B_1$ with $u(0) = 0$. Then there is a $c = c(n, \l, \|u\|_{H^1(B_1)})$ such that, for $r \leq \frac{1}{2}$, either
	\[
	\sup_{B_r} u^+ \geq c r^2 (1 + |\log r|)
	\]
	or $\{u > 0\}\cap B_{r/2} = \emptyset$, and either
	\[
	\sup_{B_r} u^- \geq c r^2 (1 + |\log r|)	
	\]
	or $\{u < 0\}\cap B_{r/2} = \emptyset$.
\end{lemma}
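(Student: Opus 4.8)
The plan is to prove the nondegeneracy by a standard comparison argument using a carefully chosen subsolution, adapted to the logarithmic nonlinearity. I will prove the statement for $u^+$; the $u^-$ case is identical by symmetry. Suppose $\{u > 0\} \cap B_{r/2} \neq \emptyset$, say $u(x_0) > 0$ for some $x_0 \in B_{r/2}$; by continuity and the fact that $u(0) = 0$, we may in fact pick $x_0$ with $u(x_0) > 0$ arbitrarily close to the free boundary, or just work at a point where $u > 0$ and push the estimate out to the boundary of a ball. The key mechanism: on the set $\{u > 0\}$ we have $\Delta u = -\lambda_+ \log u^+ = \lambda_+ (-\log u)$, which near the free boundary (where $u$ is small) is a large positive quantity. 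So $u$ is strictly subharmonic there with a quantitative lower bound on $\Delta u$, and this forces growth.

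First I would set up the comparison function. In a ball $B_\rho(x_0) \subseteq B_1$ with $x_0 \in \overline{\{u>0\}}$, consider $w(x) = u(x) - \frac{\gamma}{2n}|x - x_0|^2$ where $\gamma > 0$ is a constant to be chosen. The idea is that on the portion of $B_\rho(x_0)$ where $0 < u \leq \delta$ for a suitable small threshold $\delta$, we have $\Delta u = \lambda_+(-\log u) \geq \lambda_+ (-\log \delta) \geq \gamma$ provided $\gamma \leq \lambda_+|\log\delta|$, so $\Delta w \geq 0$ there. The difficulty is that this subharmonicity of $w$ only holds where $u$ is small; where $u$ is already of order $1$ there is nothing to prove since then $\sup u$ is large. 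So I would argue: either $\sup_{B_r} u^+ \geq c r^2(1+|\log r|)$ and we are done, or else $u$ stays small (at most $\delta_0$, a fixed constant) on $B_r$, in which case on all of $\{u > 0\} \cap B_r$ we genuinely have $\Delta u \geq \lambda_+|\log \delta_0| =: \gamma_0 > 0$, so $w = u - \frac{\gamma_0}{2n}|x|^2$ is subharmonic on $\{u > 0\} \cap B_r$. Apply the maximum principle to $w$ on the component of $\{u>0\}\cap B_r$ touching $x_0$: on the part of its boundary lying on $\partial\{u>0\}$, $w \leq 0$, and since $w(x_0) = u(x_0) - \frac{\gamma_0}{2n}|x_0|^2$, the maximum principle gives $\sup_{\partial B_r} w \geq w(x_0)$, hence
\[
\sup_{B_r} u \;\geq\; \sup_{\partial B_r \cap \{u>0\}} u \;\geq\; w(x_0) + \frac{\gamma_0}{2n} r^2 \;\geq\; u(x_0) - \frac{\gamma_0}{2n}|x_0|^2 + \frac{\gamma_0}{2n}r^2 \;\geq\; \frac{\gamma_0}{2n}\big(r^2 - |x_0|^2\big) \;\geq\; \frac{\gamma_0}{2n}\cdot\tfrac34 r^2,
\]
using $|x_0| \leq r/2$ and $u(x_0) > 0$. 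This gives the clean quadratic lower bound $\sup_{B_r} u^+ \geq c_0 r^2$.

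To upgrade the $r^2$ to $r^2(1 + |\log r|)$, I would iterate on dyadic scales, exploiting that the constant $\gamma_0$ in the previous step actually improves as the scale shrinks: on $B_r$ we only know $u \leq \delta_0$, but by the quadratic growth just proven (or by Corollary \ref{cor:optimalgrowth}, which gives $\sup_{B_s} u \leq Cs^2(1+|\log s|)$ near a free boundary point), $u$ is much smaller on smaller balls, so $-\log u \gtrsim |\log s|$ there and $\Delta u \gtrsim \lambda_+ |\log s|$ on $\{u>0\}\cap B_s$. Repeating the comparison argument with $w_s = u - \frac{\lambda_+ |\log s|}{2n c}|x - x_0|^2$ on such a ball yields $\sup_{B_{s}} u^+ \geq c\, s^2|\log s|$ directly. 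The main obstacle — and the only place real care is needed — is making this bootstrap honest: one must ensure that at the scale where we want the bound, $u$ is uniformly small enough (of order $s^2|\log s|$, which is much smaller than any fixed $\delta$ once $s$ is small, using the upper bound from Corollary \ref{cor:optimalgrowth}) so that $-\log u \geq c|\log s|$ really holds throughout $\{u>0\}\cap B_s(x_0)$, and to track that the free-boundary point $0$ (or the chosen $x_0$ near it) stays well inside these balls. One also needs to handle the transition region $|\log r| \lesssim 1$, i.e.\ $r$ bounded below, where the plain $r^2$ bound from the first step already suffices. Putting these together and absorbing constants gives $\sup_{B_r} u^+ \geq c r^2(1 + |\log r|)$ whenever $\{u>0\}\cap B_{r/2}\neq\emptyset$, and symmetrically for $u^-$.
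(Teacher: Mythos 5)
Your overall strategy (paraboloid comparison plus maximum principle on $\{u>0\}\cap B_r$) is the same as the paper's, but the step where you actually gain the logarithmic factor has a genuine gap. In the bootstrap you need $u$ to be of size $\lesssim s^2(1+|\log s|)$ on small balls so that $-\log u \gtrsim |\log s|$, and you justify this by Corollary \ref{cor:optimalgrowth}. That corollary, however, requires smallness of \emph{both} $|u(0)|$ and $|\nabla u(0)|$, while the lemma only assumes $u(0)=0$; at a zero of $u$ with $\nabla u(0)\neq 0$ the quadratic-log upper bound is simply false ($u$ grows linearly), so the corollary cannot be invoked here. Your fallback, ``by the quadratic growth just proven,'' does not help either: that is a \emph{lower} bound on $\sup_{B_r}u^+$, not an upper bound on $u$, so it gives no smallness of $u$ on smaller balls. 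As written, the upgrade from $r^2$ to $r^2(1+|\log r|)$ is therefore unjustified. The fix is much simpler and is what the paper does: by the suboptimal regularity (Propositions \ref{prop:degiorgi} and \ref{prop:C1a}, or Lemma \ref{lem:log2reg}) $u$ is Lipschitz on $B_{1/2}$ and $u(0)=0$, so $u\leq Cr$ on $B_r$, hence $-\log u\geq \tfrac12|\log r|$ on $\{u>0\}\cap B_r$ for $r$ below a fixed $r_0$, and a \emph{single} comparison with $v=u-\tfrac{\lambda_+|\log r|}{4n}|x-x_0|^2$ already produces $\sup_{B_r}u^+\geq c\,r^2|\log r|$; no iteration is needed. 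Alternatively, your own dichotomy works in one shot if you take the smallness threshold scale-dependent ($c\,r^2(1+|\log r|)$ rather than a fixed $\delta_0$) and argue by contradiction.

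Two smaller points. First, in your displayed chain you define $w$ centered at $x_0$ but then write $u-\tfrac{\gamma_0}{2n}|x|^2$; with the paraboloid centered at the origin $w(x_0)$ need not be positive, and then the maximum principle no longer forces the maximum onto $\partial B_r$ (it could sit on $\partial\{u>0\}$, where $w\leq 0$). Center the paraboloid at $x_0$, as in your definition and in the paper, so that $w(x_0)=u(x_0)>0$ while $w<0$ on $\partial\{u>0\}$; this also changes your constant ($|y-x_0|\geq r-|x_0|\geq r/2$, giving $\tfrac{\gamma_0}{8n}r^2$ rather than $\tfrac{3\gamma_0}{8n}r^2$). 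Second, your treatment of the range $r\geq r_0$ (absorbing $1+|\log r|$ into the constant) is fine and matches the paper's remark.
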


\begin{proof}
	We note that on $B_r$, $|u|\leq C r$ from Propositions \ref{prop:degiorgi} and \ref{prop:C1a}. This gives that, on $\{u>0\}$,
	\[
	\Delta u = - \l_+ \log u \geq  - \frac{1}{2}\l_+ \log r
	\]
	for $r < r_0(C)$ small enough. Then select a point $x_0 \in B_{r/2}\cap \{u>0\}$; if there is no such point, the conclusion follows directly. If there is, $v = u - \frac{\l_+|\log r|}{4n}|x - x_0|^2$ is subharmonic on $\{u > 0\}\cap B_r$. At $x_0$, $v>0$, while on $\p \{u > 0\} \cap B_r$, $v < 0$; it follows from the maximum principle that there must be a point $y$ in $\p B_r$ with $v(y) \geq 0$. This gives
	\[
	u(y) \geq \frac{\l_+}{16n} |\log r| r^2,
	\]
	implying the conclusion.
	
	If, on the other hand, $r > r_0$, one may instead use the point found for $r = r_0$ above, adjusting the constant by a factor depending on $r_0$ only to obtain the conclusion.
\end{proof}

\section*{Acknowledgments}

DK was partially supported by the NSF MSPRF fellowship DMS-1502852. Much of this work was conducted during his visit to the KTH Royal Institute of Technology, and he is grateful to it for making this possible. HS was supported by the Swedish Research Council

\bibliographystyle{plain}
\bibliography{log_obstacle_fb}

\end{document}